\definecolor{darkblue}{rgb}{0,0,0.6}
\title{On the K-theory of linear groups}
\author{Daniel Kasprowski}
\address{Max-Planck-Institut für Mathematik, Vivatsgasse 7, 53111 Bonn, Germany}
\email{kasprowski@mpim-bonn.mpg.de}
\date{\today}
\newcommand{\bbC}{\mathbbm{C}}
\newcommand{\bbQ}{\mathbbm{Q}}
\newcommand{\bbF}{\mathbb{F}}
\newcommand{\bbZ}{\mathbbm{Z}}
\newcommand{\bbN}{\mathbbm{N}}
\newcommand{\bbL}{\mathbb{L}}
\newcommand{\bbK}{\mathbb{K}}
\newcommand{\mcX}{\mathcal{X}}
\newcommand{\mcY}{\mathcal{Y}}
\newcommand{\mcZ}{\mathcal{Z}}
\newcommand{\mcn}{\mathfrak{n}}
\newcommand{\mcp}{\mathfrak{p}}
\newcommand{\mfD}{\mathfrak{D}}
\newcommand{\mfB}{\mathfrak{B}}
\DeclareMathOperator{\diam}{diam}
\DeclareMathOperator{\Map}{Map}
\DeclareMathOperator{\asdim}{asdim}
\DeclareMathOperator*{\colim}{colim}
\newcommand{\mcD}{\mathcal{D}}
\newcommand{\mcF}{\mathcal{F}}
\newcommand{\mcS}{\mathcal{S}}
\newcommand{\mcA}{\mathcal{A}}
\newcommand{\Fin}{{\mathcal{F}in}}
\newcommand{\comment}[1]{}
\numberwithin{equation}{section}
\newtheorem{thm}[equation]{Theorem}
\newtheorem{prop}[equation]{Proposition}
\newtheorem{lemma}[equation]{Lemma}
\newtheorem{cor+}{Corollary}
\newtheorem{prop+}[cor+]{Proposition}
\theoremstyle{definition}
\newtheorem{defi}[equation]{Definition}
\newtheorem{rem}[equation]{Remark}
\newtheorem{notation}[equation]{Notation}
 \newtheoremstyle{TheoremNum}
        {}{}              
        {\itshape}                      
        {}                              
        {\bfseries}                     
        {.}                             
        { }                             
        {\normalfont\bfseries\thmname{#1}\thmnote{#3}}
\theoremstyle{TheoremNum}
\begin{document}
\begin{abstract}
We prove that for a finitely generated linear group over a field of positive characteristic the family of quotients by finite subgroups has finite asymptotic dimension. We use this to show that the $K$-theoretic assembly map for the family of finite subgroups is split injective for every finitely generated linear group $G$ over a commutative ring with unit under the assumption that $G$ admits a finite-dimensional model for the classifying space for the family of finite subgroups. Furthermore, we prove that this is the case if and only if an upper bound on the rank of the solvable subgroups of $G$ exists.
\end{abstract}
\subjclass[2010]{18F25, 19A31, 19B28, 19G24}
\keywords{$K$- and $L$-theory of group rings, injectivity of the assembly map, linear groups}
\maketitle
\section{Introduction}
For every group $G$ and every ring $A$ there is a functor $\bbK_A\colon OrG\to \mathfrak{Spectra}$ from the orbit category of $G$ to the category of spectra sending $G/H$ to (a spectrum weakly equivalent to) the $K$-theory spectrum $\bbK(A[H])$ for every subgroup $H\leq G$. For any such functor $F\colon OrG\to \mathfrak{Spectra}$, a $G$-homology theory $\bbF$ can be constructed via
\[\bbF(X):=\Map_G(\_,X_+)\wedge_{OrG}F,\]
see Davis and Lück \cite{davislueck}. We will denote its homotopy groups by $H_n^G(X;F):=\pi_n\bbF(X)$. The assembly map for the family of finite subgroups is the map
\begin{equation*}
H_n^G(\underbar EG;\bbK_A)\to H^G_n(pt;\bbK_A)\cong K_n(A[G])\label{eq1}
\end{equation*}
induced by the map $\underbar EG\to pt$. Here $\underbar EG$ denotes the classifying space for the family of finite subgroups, see Lück \cite{MR1757730}. The assembly map is a helpful tool to relate the $K$-theory of the group ring $A[G]$ to the $K$-theory of the group rings over the finite subgroups $H\leq G$. It can more generally be defined for any additive $G$-category instead of $A$, see Bartels and Reich \cite{coefficients}. Note that additive categories will always be small and that K-theory will always mean non-connective K-theory constructed by Pedersen and Weibel \cite{MR802790}.
\begin{thm}
\label{thm:main}
Let $R$ be a commutative ring with unit and let $G\leq GL_n(R)$ be finitely generated. If $G$ admits a finite-dimensional model for the classifying space $\underbar EG$, then the assembly map
\[H_n^G(\underbar EG;\bbK_\mcA)\to K_n(\mcA[G])\]
is split injective for every additive $G$-category $\mcA$.

If $\mcA$ is an additive $G$-category with involution such that for every virtually nilpotent subgroup $A\leq G$ there exists $i_0\in\bbN$ such that for $i\geq i_0$ we have $K_{-i}(\mcA[A])=0$, then the $L$-theoretic assembly map
\[H_n^G(\underbar EG;\bbL^{\langle-\infty\rangle}_\mcA)\to L^{\langle-\infty\rangle}_n(\mcA[G])\]
is split injective.
\end{thm}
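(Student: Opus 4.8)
The statement connects the K-theory and L-theory of a finitely generated linear group to the K/L-theory over finite subgroups. The natural strategy is to deduce split injectivity of the assembly map for the family $\mathcal{F}in$ from the corresponding assembly map for a larger family (finite or virtually cyclic), using the Farrell–Jones machinery together with an asymptotic-dimension input. Let me think about how I'd actually structure this.

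The abstract tells me the key technical engine: the family of quotients by finite subgroups of a finitely generated linear group over a field of positive characteristic has finite asymptotic dimension. Finite asymptotic dimension is exactly the kind of hypothesis that feeds into the Bartels–Lück–Reich-style transfer/controlled-algebra arguments to produce split injectivity (not isomorphism) of assembly maps. So the backbone of my proof would be:

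1. **Reduce to positive characteristic / a field.** Since $G\le GL_n(R)$ is finitely generated, $G$ actually lands in $GL_n(R_0)$ for a finitely generated subring $R_0\le R$. Then I would use a specialization/embedding argument: a finitely generated commutative ring with unit embeds (after localizing and quotienting by a suitable maximal ideal, or via a theorem on residual finiteness / embedding into products of fields) into a product of fields, and one can further arrange that the relevant injectivity statement follows from the cases of fields of characteristic zero and positive characteristic. In characteristic zero, $G$ is linear over $\mathbb{C}$ (or a number field after further reduction), where much stronger results (even the full Farrell–Jones Conjecture or at least split injectivity via known CAT(0)/hyperbolic-type inputs, or Bartels–Lück–Reich for $GL_n$ of fields) are available. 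The genuinely new content is positive characteristic.

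2. **Establish finite asymptotic dimension.** Over a field of positive characteristic, $G\le GL_n(\mathbb{F})$ with $\mathbb{F}$ finitely generated, so $\mathbb{F}$ embeds in a local field, giving an action of $G$ on a product of buildings (Bruhat–Tits) — this is the standard source of finite-dimensionality and controlled geometry for $S$-arithmetic-type groups. The abstract's assertion — finite asymptotic dimension of the family of quotients $G/F$ by finite subgroups — is presumably proved earlier in the paper and is the crucial geometric input I would invoke.

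3. **Split injectivity via controlled algebra.** With finite asymptotic dimension in hand, I'd apply the general injectivity theorem: for a group $G$ admitting a finite-dimensional $\underline{E}G$ and such that the $\mathcal{F}in$-quotient family has finite asymptotic dimension, the $\mathcal{F}in$-assembly map in K-theory is split injective. This is exactly the pattern of results like Bartels–Rosenthal, Kasprowski's own earlier work, or Ramras–Tessera–Yu. The finite-dimensionality of $\underline{E}G$ is what converts finite asymptotic dimension of the quotient space into split injectivity rather than mere injectivity on a colimit.

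**The main obstacle.** The hardest part is *not* the final abstract descent argument (which is a black-boxable application of a known injectivity criterion) but rather transporting the positive-characteristic asymptotic-dimension result to K-theory over an *arbitrary* commutative ring $R$ and an *arbitrary* additive $G$-category $\mathcal{A}$. The coefficient category $\mathcal{A}$ has nothing to do with the field over which $G$ is linear; the linearity is used purely to get the geometric (asymptotic-dimension) input on $G$ as an abstract group, and then the controlled-algebra injectivity theorem applies with *any* additive coefficients. So the real subtlety is the reduction in step 1: one must reduce the injectivity question — which involves $\mathcal{A}$ and $R$ — to a purely group-theoretic geometric property of $G$ that is insensitive to $\mathcal{A}$, and separately verify that property using the linear structure.

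**The L-theoretic statement.** For L-theory the same injectivity machinery applies, but with the extra hypothesis on vanishing of negative K-groups $K_{-i}(\mathcal{A}[A])=0$ for virtually nilpotent $A$. This hypothesis is the standard device (going back to the treatment of $L^{\langle-\infty\rangle}$) ensuring that the various decorations of L-theory agree and that the UNil/Nil contributions are controlled, so that the L-theoretic assembly map inherits split injectivity from the same controlled-algebra transfer argument that handled K-theory. I would mirror the K-theoretic proof, replacing $\mathbb{K}_{\mathcal{A}}$ by $\mathbb{L}^{\langle-\infty\rangle}_{\mathcal{A}}$ throughout and invoking the L-theoretic version of the finite-asymptotic-dimension injectivity theorem, with the vanishing hypothesis supplying the input needed to pass to the $\langle-\infty\rangle$ decoration.
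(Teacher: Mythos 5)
Your outline captures the correct high-level shape (reduce the linearity hypothesis to a geometric property of $G$ that is independent of the coefficient category $\mcA$, then feed that into a descent/injectivity theorem), but there are two genuine gaps. First, your reduction in step 1 is wrong as stated: a finitely generated commutative ring does \emph{not} embed into a product of fields unless its nilradical is trivial; only the quotient $R/\mcn$ embeds into a finite product of domains (\cref{lem:domains}). When $\mcn\neq 0$ the paper cannot reduce to fields at all and instead runs a separate inheritance argument: it considers the extension $1\to(1+M_n(\mcn))\cap G\to G\to H\to 1$ with $H\leq GL_n(R/\mcn)$, notes that the kernel is nilpotent so that preimages of virtually cyclic subgroups of $H$ are virtually solvable and satisfy the Farrell--Jones conjecture by Wegner's theorem, and then invokes \cite[Proposition 4.1]{KasLie} to pull split injectivity back from $H$ to $G$. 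This step is not black-boxable from your sketch and is a required ingredient, not an optional refinement.

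Second, the geometric input is not finite asymptotic dimension of $\{F\backslash G\}_{F\in\Fin}$ but the weaker property of finite decomposition complexity. Finite asymptotic dimension (uniformly) is only established for the positive-characteristic field factors (\cref{prop:linear}); the characteristic-zero factors are handled via Selberg's lemma and an FDC result for virtually torsion-free groups, and the passage to the product $GL_n(Q(S/\mcp_1))\times\cdots\times GL_n(Q(S/\mcp_k))$ is carried out with an FDC product lemma (\cref{lem:prod}), after which the descent theorem applied is the FDC version (\cref{main} in the appendix). Your claim that characteristic zero is covered by ``much stronger results, even the full Farrell--Jones conjecture'' is not available for arbitrary finitely generated linear groups over $\bbC$. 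Relatedly, the mechanism is not an action on Bruhat--Tits buildings but the length functions $l_\gamma$ attached to a finite set of discrete norms, following Guentner--Higson--Weinberger. The L-theory paragraph is fine in spirit, but the hypothesis $K_{-i}(\mcA[A])=0$ for virtually nilpotent $A$ enters through the nilradical extension step (the kernel being nilpotent), which your outline omits.
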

\cref{thm:main} implies the (generalized integral) Novikov conjecture for these groups by \cite[Section 6]{KasLie}, since virtually nilpotent groups satisfy the Farrell--Jones conjecture by Wegner \cite{solvable}. The (rational) Novikov conjecture for these groups is already known by Guentner, Higson and Weinberger \cite{GHWNovikovlinear}, where it is shown that the Baum-Connes assembly map is split injective for linear groups.

We will use inheritance properties to reduce the proof of the theorem to the case where the ring $R$ has trivial nilradical and show that in this case the family $\{F\backslash G\}_{F\in\Fin}$ has finite decomposition complexity, where $\Fin$ denotes the family of finite subgroups of $G$. Then the theorem follows from the main theorem of \cite{thesis}. For convenience, the necessary results of \cite{thesis} are recalled in the appendix.

By a result of Alperin and Shalen \cite{alperin} a finitely generated subgroup $G$ of $GL_n(F)$, where $F$ is a field of characteristic zero, has finite virtual cohomological dimension if and only if there is a bound on the Hirsch rank of the unipotent subgroups of $G$. This in particular implies that it has a finite-dimensional model for the classifying space $\underbar EG$. In positive characteristic, a finitely generated subgroup $G\leq GL_n(F)$ always admits a finite-dimensional model for $\underbar EG$ by Degrijse and Petrosyan \cite[Corollary 5]{poschar}. We generalize this in the following way.
\begin{prop}
\label{prop:dim}
Let $R$ be a commutative ring with unit and let $G\leq GL_n(R)$ be finitely generated. Then $G$ admits a finite-dimensional model for $\underbar EG$ if and only if there exists $N\in\bbN$ such that $l(A)\leq N$ for every solvable subgroup $A\leq G$, where $l(A)$ denotes the Hirsch rank of $A$.
\end{prop}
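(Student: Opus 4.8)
The plan is to treat the two implications separately; the forward direction is short, while the converse carries the real content and rests on the field cases \cite{alperin} and \cite{poschar}. For the forward direction, suppose $X$ is a model for $\underbar EG$ of dimension $d$. For any subgroup $A\leq G$ the restricted action makes $X$ a $d$-dimensional model for $\underbar EA$, and I would pass to rational coefficients. The cellular chain complex $C_*(X;\bbQ)$ has length $d$, and its terms are sums of permutation modules $\bbQ[A/F]$ with $F$ finite, each of which is projective over $\bbQ[A]$ because $|F|$ is invertible in $\bbQ$. Since $X$ is contractible, $C_*(X;\bbQ)$ is a length-$d$ projective resolution of $\bbQ$, so $\operatorname{cd}_{\bbQ}(A)\leq d$. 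If $A$ is solvable, Stammbach's theorem identifies $\operatorname{cd}_{\bbQ}(A)$ with the Hirsch rank $l(A)$, whence $l(A)\leq d$ and $N=d$ works.

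For the converse I would first shrink $R$ to the subring generated by the matrix entries of a finite generating set of $G$ together with their inverses, so that $R$ becomes finitely generated, hence Noetherian with nilpotent nilradical $\mathfrak{n}$. Writing $U=G\cap(1+M_n(\mathfrak{n}))$ and $\bar G=G/U$, the matrices of $U$ are unipotent, so $U$ is nilpotent; and since a finitely generated linear group has a bound on the orders of its finite subgroups, $U$ is virtually a torsion-free nilpotent group of Hirsch rank at most $N$ and therefore admits a finite-dimensional model for $\underbar EU$. The hypothesis also descends to $\bar G$: the preimage in $G$ of a solvable subgroup of $\bar G$ is an extension of a solvable group by the nilpotent group $U$, hence solvable, and Hirsch rank is additive in extensions, so every solvable subgroup of $\bar G$ still has Hirsch rank at most $N$.

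Now $R/\mathfrak{n}$ is reduced with finitely many minimal primes $\mathfrak{p}_1,\dots,\mathfrak{p}_r$ and embeds into $\prod_i F_i$ with $F_i=\operatorname{Frac}(R/\mathfrak{p}_i)$ a finitely generated field; thus $\bar G$ is a subdirect product inside $\prod_i GL_n(F_i)$, with $i$-th projection a finitely generated linear group $\bar G_i$ over $F_i$. For the factors of positive characteristic, \cite{poschar} provides a finite-dimensional model for $\underbar E\bar G_i$ with no further hypothesis. For a factor with $\operatorname{char}F_i=0$ I would invoke \cite{alperin}: $\bar G_i$ has finite virtual cohomological dimension, and hence a finite-dimensional $\underbar E\bar G_i$, as soon as the Hirsch ranks of its unipotent subgroups are bounded. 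Here lies the step I expect to be hardest. Passing from $\bar G$ to the projection $\bar G_i$ is passing to a quotient by a kernel that need not be nilpotent, and the Hirsch rank of a solvable group can strictly increase under such quotients, so the bound on solvable subgroups of $\bar G$ does not transfer formally. I would resolve this using the algebraic-group structure underlying Alperin--Shalen: a unipotent subgroup of $\bar G_i$ of large Hirsch rank should be traced, via Zariski closures and the unipotent radical, back to a solvable subgroup of $\bar G$ of comparable Hirsch rank, contradicting the bound $N$ once the rank exceeds it.

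Once each $\bar G_i$ has a finite-dimensional $\underbar E\bar G_i$, I would assemble them: a product of such models is a model for $\underbar E(\prod_i\bar G_i)$, and restricting it along the inclusion $\bar G\hookrightarrow\prod_i\bar G_i$ yields a finite-dimensional $\underbar E\bar G$. Finally I would lift this to $G$ along $1\to U\to G\to\bar G\to 1$ by pulling the model for $\underbar E\bar G$ back to $G$ and gluing in a finite-dimensional $\underbar E$ of the preimage of each finite subgroup of $\bar G$; each such preimage is, like $U$, virtually a torsion-free nilpotent group of bounded Hirsch rank with finite subgroups of bounded order, hence has a finite-dimensional model, and the construction produces a finite-dimensional model for $\underbar EG$.
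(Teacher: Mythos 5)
Your forward direction is sound and takes a genuinely different route from the paper's: you use $C_*(X;\bbQ)$ as a projective $\bbQ[A]$-resolution together with Stammbach's theorem, whereas the paper argues through Bredon (co)homological dimension and Flores--Nucinkis. Two touch-ups: Stammbach computes the homological dimension $\operatorname{hd}_{\bbQ}(A)=l(A)$, not $\operatorname{cd}_{\bbQ}(A)$ (the chain $l(A)=\operatorname{hd}_{\bbQ}(A)\leq\operatorname{cd}_{\bbQ}(A)\leq d$ is what you actually need), and you must dispose of solvable subgroups of infinite Hirsch length by noting that they contain subgroups of arbitrarily large finite Hirsch length.

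The converse, however, has a genuine gap exactly where you flag it, and your proposed repair does not work. The hypothesis on $\bar G$ does not descend to the projection $\bar G_i$ onto a characteristic-zero factor: the preimage in $\bar G$ of a unipotent or solvable subgroup of $\bar G_i$ contains the entire kernel of that projection, which is neither solvable nor of controlled Hirsch rank, so there is no solvable subgroup of $\bar G$ to compare against, and ``tracing back via Zariski closures'' does not produce one. The paper's fix is structural rather than factor-by-factor: it takes $\pi$ to be the projection onto \emph{all} positive-characteristic factors at once and $\rho$ the projection onto the characteristic-zero factors bundled into a single $GL_{n(k-q)}(\bbC)$. Since $\bar G$ embeds into the product of all factors, $\ker\pi\cap\ker\rho=1$, so $\rho$ is injective on $\ker\pi$; the rank bound on finitely generated abelian subgroups therefore transfers to $\rho(\pi^{-1}(F))$ for each finite $F\leq\pi(\bar G)$, is converted into a bound on unipotent subgroups via \cite[Proposition 3.1]{KasLie}, and only then is Alperin--Shalen invoked --- always for a subgroup of $GL_{n(k-q)}(\bbC)$, never for an individual quotient $\bar G_i$. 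The positive-characteristic quotient $\pi(\bar G)$ needs no hypothesis at all by Degrijse--Petrosyan, and \cref{thm:dim} glues the two.

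There is a second, independent error: a finitely generated linear group over a commutative ring need not have bounded orders of finite subgroups, and $U=G\cap(1+M_n(\mathfrak{n}))$ need not be virtually torsion-free. Over $R=\bbF_p[s,s^{-1},t]/(t^2)$ the group $G$ generated by $1+tE_{12}$ and $\operatorname{diag}(s,1)$ is metabelian of Hirsch rank $1$, yet $G\cap(1+M_2(\mathfrak{n}))$ contains the infinite elementary abelian $p$-group $\{1+ftE_{12}\mid f\in\bbF_p[s,s^{-1}]\}$. So your finite-dimensional models for $\underbar EU$ and for the preimages of finite subgroups of $\bar G$ cannot come from virtual torsion-freeness; you need, as the paper does, the Flores--Nucinkis identity $\underline{hd}A=l(A)$ for elementary amenable $A$ of finite Hirsch length together with $\underline{cd}A\leq\underline{hd}A+1$ and $\underline{gd}A\leq\max\{\underline{cd}A,3\}$.
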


Let $G$ be a solvable group and $1=G_0\trianglelefteq G_1\trianglelefteq\ldots G_{n-1}\trianglelefteq G_n=G$ a normal series with abelian factors. The \emph{Hirsch rank} (or \emph{Hirsch length}) $l(G)$ of $G$ is 
\[l(G)=\sum_{i=1}^n\dim_\bbQ\bbQ\otimes_\bbZ(A_i/A_{i-1}).\]

We will prove the proposition in \cref{sec:dimension}.\\

\textbf{Acknowledgments:} I would like to thank Christoph Winges and the referee for useful comments and suggestions. This work was supported by the Max Planck Society.
\section{Finite decomposition complexity}
Let $X$ be a metric space. A decomposition $X=\bigcup_{i\in I} U_i$ is called \emph{$r$-disjoint}, if $d(U_i,U_j)>r$ for all $i\neq j\in I$. We then denote the decomposition by \[X=\bigcup^{r\text{-disj.}}U_i.\]
A \emph{metric family} is a set of metric spaces. A metric family $\{X_i\}_{i\in I}$ has \emph{finite asymptotic dimension uniformly} if there exists an $n\in\bbN$ such that for every $r>0,$ $i\in I$ there exists decompositions $X_i=\bigcup_{k=0}^n U_{i}^k$ and
\[U_i^k=\bigcup_{j\in J_{i,k}}^{r\text{-disj.}}U_{i,j}^k\]
such that $\sup_{i,j,k}U_{i,j}^k<\infty$.

In \cite{FDC} Guentner, Tessera and Yu introduced the following generalization of finite asymptotic dimension.
\begin{defi}
\label{def:fdc}
Let $r>0$. A metric family $\mcX=\{X_\alpha\}_{\alpha\in A}$ \emph{$r$-decomposes over a class of metric families $\mfD$} if for every $\alpha\in A$ there exists a decomposition $X_\alpha=U^r_\alpha\cup V^r_\alpha$ and $r$-disjoint decompositions
\[U_\alpha^r=\bigcup^{r\text{-disj.}}_{i\in I(r,\alpha)}U_{\alpha,i}^r,\qquad V_\alpha^r=\bigcup^{r\text{-disj.}}_{j\in J(r,\alpha)}V_{\alpha,j}^r\]
such that the families $\{U_{\alpha,i}^r\}_{\alpha\in A,i\in I(r,\alpha)}$ and $\{V_{\alpha,j}^r\}_{\alpha\in A,j\in J(r,\alpha)}$ lie in $\mfD$. A metric family $\mcX$ \emph{decomposes over $\mfD$} if it $r$-decomposes over $\mfD$ for all $r>0$.

Let $\mfB$ denote the class of \emph{bounded families}, i.e. $\mcX\in\mfB$ if there exists $R>0$ such that $\diam X<R$ for all $X\in\mcX$. We set $\mfD_0=\mfB$. For a successor ordinal $\gamma+1$ we define $\mfD_{\gamma+1}$ to be the class of all metric families which decompose over $\mfD_\gamma$. For a limit ordinal $\lambda$ we define
\[\mfD_\lambda=\bigcup_{\gamma<\lambda}\mfD_\gamma.\]
A metric family $\mcX$ has \emph{finite decomposition complexity (FDC)} if $\mcX\in \mfD_\gamma$ for some ordinal~$\gamma$.

A metric space $X$ has FDC if the family $\{X\}$ consisting only of $X$ has FDC. A group $G$ has FDC if it has FDC with any (and thus every) proper left-invariant metric.
\end{defi}
A subfamily $\mcZ$ of a metric family $\mcY$ is a metric family $\mcZ$ such that for each $Z\in \mcZ$ there exists an $Y\in\mcZ$ with $Y\subseteq X$.

A map $F\colon \mcX\to \mcY$ is between metric families $\mcX,\mcY$ is a set of maps from elements of $\mcX$ to elements of $\mcY$ such that every $X\in\mcX$ is the domain of at least one $f\in F$. The inverse image $F^{-1}(\mcZ)$ of a subfamily $\mcZ$ of $\mcY$ is the metric family $\{f^{-1}(Z)\mid Z\in\mcZ, f\in F\}$.
A map $F\colon \mcX\to \mcY$ is called \emph{uniformly expansive}, if there exists a non-decreasing function $\rho\colon[0,\infty)\to[0,\infty)$ such that for every $f\colon X\to Y$ in $F$ and every $x,y\in X$ we have
\[d(f(x),f(y))\leq \rho(d(x,y)).\]
We will use the following three results about FDC.
\begin{thm}[{\cite[Fibering Theorem 3.1.4]{FDC}}]
Let $\mcX$ and $\mcY$ be metric families and let $F\colon \mcX\to \mcY$ be uniformly expansive. Assume $\mcY$ has FDC and that for every bounded subfamily $\mcZ$ of $\mcY$ the inverse image $F^{-1}(\mcZ)$ has FDC. Then $\mcX$ also has FDC.
\end{thm}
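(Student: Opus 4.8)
The plan is to prove, by transfinite induction on $\gamma$, the statement $P(\gamma)$: \emph{for all metric families $\mcX,\mcY$ with $\mcY\in\mfD_\gamma$ and every uniformly expansive $F\colon\mcX\to\mcY$ such that $F^{-1}(\mcZ)$ has FDC for every bounded subfamily $\mcZ$ of $\mcY$, the family $\mcX$ has FDC.} The geometric input is a single pullback observation: if $F$ has control function $\rho$ and $Y=\bigcup^{\rho(r)\text{-disj.}}_i Y_i$, then for every $f\colon X\to Y$ in $F$ we have $X=\bigcup^{r\text{-disj.}}_i f^{-1}(Y_i)$, because $x\in f^{-1}(Y_i)$, $x'\in f^{-1}(Y_j)$ with $i\neq j$ and $d(x,x')\leq r$ would give $d(f(x),f(x'))\leq\rho(r)$, contradicting $d(Y_i,Y_j)>\rho(r)$. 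Thus a uniformly expansive map pulls $\rho(r)$-disjoint decompositions of the target back to $r$-disjoint decompositions of the source, and I will also freely use that FDC passes to subfamilies (restrict the witnessing decompositions).

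In the base case $\gamma=0$ the family $\mcY$ is bounded and hence a bounded subfamily of itself, so $F^{-1}(\mcY)$ has FDC by hypothesis. As every $X\in\mcX$ is the domain of some $f\colon X\to Y$ with image in $Y$, we have $X=f^{-1}(Y)\in F^{-1}(\mcY)$, so $\mcX$ is a subfamily of $F^{-1}(\mcY)$ and has FDC. The limit case is immediate: if $\mcY\in\mfD_\lambda=\bigcup_{\beta<\lambda}\mfD_\beta$ then $\mcY\in\mfD_\beta$ for some $\beta<\lambda$ and $P(\beta)$ applies.

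For the successor case assume $\mcY\in\mfD_{\gamma+1}$ and fix $r>0$. Decomposing $\mcY$ over $\mfD_\gamma$ at scale $\rho(r)$ yields, for each $Y\in\mcY$, a splitting $Y=U\cup V$ with $\rho(r)$-disjoint decompositions $U=\bigcup^{\rho(r)\text{-disj.}}_i U_i$ and $V=\bigcup^{\rho(r)\text{-disj.}}_j V_j$ whose pieces form families $\mcU,\mcV\in\mfD_\gamma$. Choosing for each $X\in\mcX$ a map $f\colon X\to Y$ in $F$ and applying the pullback observation gives $X=f^{-1}(U)\cup f^{-1}(V)$ with $r$-disjoint decompositions into the sets $f^{-1}(U_i)\in F^{-1}(\mcU)$ and $f^{-1}(V_j)\in F^{-1}(\mcV)$. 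Hence it suffices to show that $F^{-1}(\mcU)$ and $F^{-1}(\mcV)$ have FDC, and for this I apply the inductive hypothesis to the restricted maps $F^{-1}(\mcU)\to\mcU$ and $F^{-1}(\mcV)\to\mcV$. These are uniformly expansive with the same $\rho$, their targets lie in $\mfD_\gamma$, and the essential point is that a bounded subfamily $\mcW$ of $\mcU$ consists of bounded subsets of elements $U_i\subseteq Y$, so that $\mcW$ is already a bounded subfamily of $\mcY$; its inverse image under the restricted map is a subfamily of $F^{-1}(\mcW)$ and hence has FDC by the standing hypothesis. Thus $P(\gamma)$ yields FDC for $F^{-1}(\mcU)$ and $F^{-1}(\mcV)$.

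This shows that for each $r>0$ the family $\mcX$ $r$-decomposes over FDC families, with a priori $r$-dependent levels $\delta_r$; putting $\delta=\sup_{r>0}\delta_r$, all pieces lie in $\mfD_\delta$, whence $\mcX\in\mfD_{\delta+1}$ has FDC. I expect the main obstacle to be the successor step, and specifically the transfer of the inverse-image hypothesis from $\mcY$ to the decomposition pieces $\mcU$ and $\mcV$: getting the definitions of ``map between metric families'' and ``inverse image'' to cooperate so that bounded subfamilies of the pieces are visible as bounded subfamilies of $\mcY$ is exactly what keeps the induction running, whereas the final ordinal bookkeeping is routine once one notes that a supremum of ordinals is again an ordinal.
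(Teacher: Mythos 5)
The paper gives no proof of this statement---it is quoted directly from Guentner--Tessera--Yu---so the only comparison available is with the proof in that reference, and your argument is correct and is essentially that proof: transfinite induction on the least $\gamma$ with $\mcY\in\mfD_\gamma$, using that a uniformly expansive map pulls a $\rho(r)$-disjoint decomposition of the target back to an $r$-disjoint decomposition of the source, with the key point (which you identify correctly) being that bounded subfamilies of the decomposition pieces $\mcU,\mcV$ are bounded subfamilies of $\mcY$, so the inverse-image hypothesis restricts and the induction closes. The only detail left tacit is the monotonicity $\mfD_\alpha\subseteq\mfD_\beta$ for $\alpha\leq\beta$ needed in your final step to place all pieces in a single $\mfD_\delta$; this is standard (every family trivially decomposes over any class containing it) and does not affect correctness.
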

\begin{thm}[{\cite[Theorem 4.1]{FDC}}]
A metric space $X$ with finite asymptotic dimension has FDC.
\end{thm}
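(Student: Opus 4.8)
My plan is to prove the more flexible statement for metric families: if a metric family $\mcX$ has finite asymptotic dimension uniformly, with value $\le n$, then $\mcX\in\mfD_{n+1}$; the theorem is then the special case $\mcX=\{X\}$. I would argue by induction on $n$, since the reduction step below naturally produces families rather than single spaces.

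For the base case $n=0$ the defining decomposition of $\mcX$ at scale $r$ is a single $r$-disjoint family of uniformly bounded pieces. Feeding this into \cref{def:fdc} with $U^r_\alpha=X_\alpha$ and $V^r_\alpha=\emptyset$ exhibits $\mcX$ as $r$-decomposing over $\mfB=\mfD_0$ for every $r$, so $\mcX\in\mfD_1$. (The same idea already disposes of $n\le 1$: the two colour classes of an asymptotic dimension $\le 1$ decomposition can be placed into the two slots $U^r$ and $V^r$, again giving membership in $\mfD_1$.)

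For the inductive step the crux is a reduction lemma: if $\mcX$ has asymptotic dimension $\le n$ uniformly, with $n\ge 1$, then for every $r>0$ the family $\mcX$ $r$-decomposes over the class of families of asymptotic dimension $\le n-1$ uniformly. Granting this, the induction hypothesis identifies that class as a subclass of $\mfD_n$, so $\mcX$ $r$-decomposes over $\mfD_n$ for every $r$, which is exactly membership in $\mfD_{n+1}$; since $n$ is finite, $\mcX$ has FDC.

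The reduction lemma is the hard part, and it is where the geometry of asymptotic dimension genuinely enters. The naive attempt --- take the $(n+1)$-colouring at scale $r$, put one colour class into $U^r$ and the union of the remaining $n$ classes into $V^r$ --- fails, since deleting one colour need not lower the asymptotic dimension of what remains (as one already sees for $\IZ^2$). Instead I would pass through the nerve: from asymptotic dimension $\le n$ one builds, for a large scale $S$, a uniformly bounded cover of multiplicity $\le n+1$ together with an associated uniformly expansive, uniformly cobounded map $\phi\colon X\to K$ to the nerve $K$, an $n$-dimensional simplicial complex. Slicing $K$ by distance to its $(n-1)$-skeleton produces a ``fat'' part, retracting onto a union of uniformly bounded top-cell interiors, and a ``thin'' collar mapping into a neighbourhood of the $(n-1)$-skeleton; pulling these back along $\phi$ and using that $\phi$ is uniformly cobounded yields the desired splitting of $X$ into two $r$-disjoint families whose pieces, landing in an essentially $(n-1)$-dimensional part of $K$, have asymptotic dimension $\le n-1$ uniformly. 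Equivalently, one may feed $\phi$ directly into the Fibering Theorem \cite[Fibering Theorem 3.1.4]{FDC} and reduce the FDC of $X$ to the FDC of the finite-dimensional complex $K$, which is then established by an inner induction on $\dim K$ using the same fat/thin skeletal decomposition. I expect the main obstacle to be making this dimension drop uniform across the whole family and across all inner scales at once: that is, verifying that the slab families genuinely have asymptotic dimension $\le n-1$ \emph{uniformly}, rather than merely avoiding one colour at the single scale $S$.
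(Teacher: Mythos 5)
First, note that the paper does not prove this statement at all: it is quoted verbatim from \cite[Theorem 4.1]{FDC}, so the only meaningful comparison is with the proof given there. Your overall scheme --- induct on $n$, with the base cases $n\le 1$ handled by feeding the colour classes of the asymptotic dimension decomposition directly into the two slots of \cref{def:fdc} --- is sound, and those base cases are complete and correct. You are also right that the naive step (discard one colour class) fails. The problem is that your proposed mechanism for the inductive step fails for essentially the same reason, so the reduction lemma, which you correctly identify as the entire content of the theorem, is not established by your sketch.

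Concretely, take $X=\bbZ^2$, $n=2$. For any uniformly bounded cover $\mcW$ of multiplicity $3$ with large Lebesgue number, every point of the nerve $K$ lies within distance $1$ of the $1$-skeleton $K^{(1)}$, so $K^{(1)}$ is coarsely dense in $K$; correspondingly, the pullback $V=\phi^{-1}(B)$ of any neighbourhood $B$ of $K^{(1)}$ contains the complement of an $r$-disjoint union of uniformly bounded sets and is therefore coarsely dense in $\bbZ^2$. Hence $\asdim V=2$, not $1$: the simplicial dimension of $K^{(1)}$ gives no control whatsoever on asymptotic dimension (a $1$-dimensional simplicial complex can be the Cayley graph of any finitely generated group, which also breaks your alternative route of fibering over $K$ and inducting on $\dim K$). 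Worse, this $V$ is $r$-connected for the relevant scales, so it admits no nontrivial $r$-disjoint decomposition and must enter the decomposition as a single piece of asymptotic dimension $n$; the induction therefore does not close. The obstacle is thus not, as you anticipate, merely one of making the dimension drop uniform --- in your construction there is no dimension drop at all. The decomposition that does work for $\bbZ^2$ (alternating strips $\bbZ\times[2kL,(2k+1)L]$ versus $\bbZ\times[(2k+1)L,(2k+2)L]$, each piece quasi-isometric to $\bbZ$) is not of the ``fat top cells versus collar of the skeleton'' form and does not arise from slicing a single nerve; producing its analogue for a general space of asymptotic dimension $n$ is exactly the nontrivial construction carried out in \cite[Theorem 4.1]{FDC}, which you should consult for the inductive step.
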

While the above theorem is stated only for metric spaces it also holds for metric families which have finite asymptotic dimension uniformly.
\begin{thm}[{\cite[3.1.7]{FDC}}]
\label{thm:union}
Let $X$ be a metric space, expressed as a union of finitely many subspaces $X=\bigcup_{i=0}^nX_i$. If the metric family $\{X_i\}_{i=0,\ldots,n}$ has FDC, so does $X$.
\end{thm}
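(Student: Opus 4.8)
The plan is to reduce to a union of two subspaces and then argue by transfinite induction on the decomposition complexity of the covering family. First I would reduce to the case $X=A\cup B$ of two subspaces, each having FDC. The classes $\mfD_\gamma$ are closed under passing to subfamilies (an immediate transfinite induction: a subset of a bounded set is bounded, and restricting a decomposition to a subset again decomposes it), so every subfamily of $\{X_i\}_{i=0,\ldots,n}$ has FDC. In particular $\{X_0,\ldots,X_{n-1}\}$ has FDC, and by induction on $n$ the subspace $A:=X_0\cup\cdots\cup X_{n-1}$ has FDC. Since a finite family has FDC precisely when each of its members does, $\{A,X_n\}$ has FDC, and it suffices to treat $X=A\cup B$ with $B:=X_n$. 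Replacing $B$ by $B\setminus A$, which is again a subspace and hence has FDC, I may assume $A\cap B=\emptyset$.

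For the two-subspace case I would prove, by transfinite induction on the least $\gamma$ with $\{A,B\}\in\mfD_\gamma$, a quantitative statement: if $X=A\sqcup B$ with $\{A,B\}\in\mfD_\gamma$, then $\{X\}\in\mfD_{g(\gamma)}$ for an ordinal $g(\gamma)$ depending only on $\gamma$ (the uniform bound being needed so that the decompositions produced at the various scales can be assembled). The base case $\gamma=0$ is that $A$ and $B$ are uniformly bounded; at a scale $r$ one either has $d(A,B)>r$, so $X$ is already the $r$-disjoint union of the two bounded sets $A$ and $B$, or $d(A,B)\le r$, so $X$ itself is bounded, and in both cases $\{X\}\in\mfD_1$. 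For a successor $\gamma=\delta+1$ I would fix a scale $r$, use that $\{A,B\}$ decomposes over $\mfD_\delta$ to write $A=U^A\cup V^A$ and $B=U^B\cup V^B$ as $r$-disjoint unions of $\mfD_\delta$-pieces, and then form the two super-layers $U^A\cup U^B$ and $V^A\cup V^B$, whose union is $X$.

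The hard part will be the reassembly. A super-layer such as $U^A\cup U^B$ is in general not an $r$-disjoint union of $\mfD_\delta$-pieces: an $A$-piece and a $B$-piece within distance $r$ merge, and because the pieces are unbounded, chains of alternating pieces can fuse into large $r$-components. The point that keeps the induction alive is that each resulting $r$-component is the union of a subspace of $A$ and a subspace of $B$, so by permanence of $\mfD_\gamma$ under subspaces its complexity is controlled by that of $A$ and $B$ and cannot blow up; the delicate issue is instead that it need not \emph{decrease}, so the recursion must be organised with care to remain well-founded rather than merely reproducing a problem of the same complexity. I expect this bookkeeping to be the technical heart of the proof. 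Note that the elementary finite-colouring proof of the union theorem for asymptotic dimension is not available here, since in an arbitrary metric space the number of pairwise $r$-close members of an $r$-disjoint family need not be finite; this is exactly why the flexible FDC framework, rather than a bound on asymptotic dimension, is required, and why the Fibering Theorem cannot be applied directly, there being no suitable uniformly expansive map off $X$ separating $A$ from $B$. Feeding the two-subspace case back into the induction on $n$ of the first step then yields the theorem.
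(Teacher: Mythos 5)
There is a genuine gap, and it stems from a misreading of \cref{def:fdc}. In the definition of an $r$-decomposition, the pieces are required to be $r$-disjoint only \emph{within} each of the two layers $U$ and $V$ separately; nothing forces a piece of $U$ to be far from a piece of $V$, and a layer consisting of a \emph{single} piece is vacuously an $r$-disjoint union. Consequently the entire difficulty you locate in the ``reassembly'' step is illusory: given $X=A\cup B$ with $\{A,B\}\in\mfD_\gamma$, for every $r>0$ simply take $U=A$ and $V=B$, each as a one-member family. This exhibits an $r$-decomposition of $\{X\}$ over $\mfD_\gamma$ at every scale $r$, so $X\in\mfD_{\gamma+1}$ in a single step, with no transfinite induction on $\gamma$, no merging of pieces, and no base-case analysis of $d(A,B)$. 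The general case follows by taking $U=X_0$ and $V=X_1\cup\cdots\cup X_n$ as single pieces and inducting on $n$ (your reduction to two subspaces is fine, though replacing $B$ by $B\setminus A$ is unnecessary), and the same observation proves the family version $\{\bigcup_{i=0}^n X_{ij}\}_{j\in J}$ used in the paper. This triviality is exactly why \cite[3.1.7]{FDC} is quoted without proof, in contrast to the finite union theorem for asymptotic dimension, which your colouring remarks are implicitly modelled on.

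Beyond being unnecessary, your proposed induction step is not actually a proof: you concede that the complexity of a merged $r$-component ``need not decrease'' and that the bookkeeping is ``the technical heart,'' but that heart is left entirely open, and as organised it is circular --- you control an $r$-component by observing it is a union of a subspace of $A$ and a subspace of $B$, yet bounding the complexity of precisely such a union is the statement being proved. So the recursion is not well-founded, and no amount of care in ordering the super-layers $U^A\cup U^B$ and $V^A\cup V^B$ repairs it, because chains of alternating unbounded pieces can indeed fuse, as you note. The correct moral is that FDC was designed so that finite unions are a non-issue: the flexibility you praise in the framework is located in the definition itself (one-piece layers are allowed), not in a delicate decomposition argument. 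Your auxiliary observations --- closure of $\mfD_\gamma$ under subfamilies, and that a finite family has FDC iff each member does --- are correct and are the only ingredients actually needed.
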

This theorem again holds for metric families instead of metric spaces, i.e. a metric family $\{\bigcup_{i=0}^nX_{ij}\}_{j\in J}$ has FDC if and only if the family $\{X_{ij}\}_{j\in J,i=0,\ldots,n}$ has FDC. 
We will also need the following two results about finite asymptotic dimension.
\begin{lemma}
\label{thm:roe}
Let $P\colon \mcX\to \mcY$ be a family of maps such that there is $k>0$ and each $p\in P$ is $k$-Lipschitz. Suppose that $\mcY$ has finite asymptotic dimension uniformly and that for each $R>0$ the family \[\{p^{-1}(B_R(y))\mid X\in \mcX, Y\in \mcY, y\in Y, (p\colon X\to Y)\in P\}\] has finite asymptotic dimension uniformly. Then $\mcX$ has finite asymptotic dimension uniformly.
\end{lemma}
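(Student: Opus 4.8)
The plan is to treat this as a fibering/Hurewicz-type statement for uniform finite asymptotic dimension: first decompose the base family $\mcY$ coarsely, pull the decomposition back along $P$, and then refine each pulled-back piece using the uniform finite asymptotic dimension of the fibre families. Throughout I fix a scale $r>0$ and must produce, for every $p\colon X\to Y$ in $P$, a decomposition of $X$ into at most $N+1$ colours, with $N$ independent of $r$ and of $p$, each colour being an $r$-disjoint union of pieces of uniformly bounded diameter (the bound being allowed to depend on $r$).

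First I would exploit the Lipschitz control to pull back a base decomposition. Let $n$ be a uniform asymptotic dimension bound for $\mcY$. Applying it at the enlarged scale $kr$ gives, for each $Y$, a decomposition $Y=\bigcup_{a=0}^n V^a$ with $V^a=\bigcup^{kr\text{-disj.}}_j V^a_j$ and $\diam V^a_j\le D$, where $D=D(r)$ is uniform in $Y$. Since each $p\in P$ is $k$-Lipschitz, $d(A,B)>kr$ forces $d(p^{-1}(A),p^{-1}(B))>r$; hence $p^{-1}(V^a)=\bigcup^{r\text{-disj.}}_j p^{-1}(V^a_j)$ for each $a$. The pieces $W^a_j:=p^{-1}(V^a_j)$ are $r$-disjoint within a fixed colour $a$, but they are unbounded: each is merely contained in $p^{-1}(B_D(y^a_j))$ for a chosen centre $y^a_j\in V^a_j$.

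Next I would refine these coarse pieces. Let $m$ be a uniform asymptotic dimension bound for the fibre family $\{p^{-1}(B_R(y))\}$; applied at scale $r$ to each $p^{-1}(B_D(y^a_j))$ it yields $m+1$ colours of $r$-disjoint pieces of diameter $\le D'=D'(r)$. Intersecting these with $W^a_j$ and re-indexing by the pair $(a,b)$, $b\in\{0,\dots,m\}$, gives a decomposition of $X$ into $(n+1)(m+1)$ colours. The key verification is that each colour $(a,b)$ is genuinely $r$-disjoint: two of its pieces either lie in the same $W^a_j$, where they are $r$-disjoint by the fibre decomposition, or in distinct $W^a_j,W^a_{j'}$, which are $r$-disjoint by the pull-back step. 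Since every piece has diameter at most $D'$, this exhibits uniform asymptotic dimension $\le(n+1)(m+1)-1$ at scale $r$.

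The hard part will be the uniformity in $r$. The number of colours $n$ contributed by the base is independent of $r$, but the fibre radius $D=D(r)$ grows with $r$, so the refinement step is applied to balls of ever larger radius. For the final bound to be independent of $r$ --- which is exactly what ``finite asymptotic dimension uniformly'' demands --- I need a single bound $m$ valid for all radii $R$, rather than a bound $m_R$ that degenerates as $R\to\infty$. This is the one delicate point: I would read and use the hypothesis as providing an asymptotic dimension bound for the fibre families that is uniform in the radius $R$, equivalently folding $R$ into the index set of the family $\{p^{-1}(B_R(y))\}$. Granting this, $(n+1)(m+1)-1$ is a uniform bound and the proof is complete; the remaining bookkeeping (discarding empty pieces, replacing $B_D$ by a slightly larger ball if balls are open, and checking that the diameter bounds $D(r),D'(r)$ are finite) is routine.
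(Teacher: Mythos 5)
Your argument is the standard direct proof of the Hurewicz-type lemma being cited: the paper itself gives no argument here, but simply refers to \cite[Lemma 9.16]{roe} and asserts that the proof carries over verbatim to families. Your skeleton --- decompose each $Y$ at the enlarged scale $kr$, use the $k$-Lipschitz condition to see that preimages of $kr$-disjoint sets are $r$-disjoint, trap each $p^{-1}(V^a_j)$ inside some $p^{-1}(B_D(y^a_j))$, refine by the fibre hypothesis, and take the product colouring with $(n+1)(m+1)$ colours --- is exactly that proof, and each of the verifications you list goes through.

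The one delicate point is the one you flag yourself, but your resolution of it is not a proof of the stated lemma: you propose to \emph{read} the hypothesis as giving a dimension bound for $\{p^{-1}(B_R(y))\}$ that is uniform in $R$, whereas the statement only asserts, for each fixed $R$, the existence of some bound $m_R$; since your argument invokes the hypothesis at the radius $D=D(r)\to\infty$, silently replacing $m_{D(r)}$ by a single $m$ amounts to strengthening the hypothesis. The gap can be closed without changing the statement in the situations where the lemma is actually used. Fix $R_0>0$ and set $m:=m_{R_0}$. Whenever a ball $B_D(y)$ in a space of $\mcY$ admits an $R_0$-net of cardinality at most $C(D)<\infty$ (uniformly over $\mcY$), the set $p^{-1}(B_D(y))$ is a union of at most $C(D)$ sets of the form $p^{-1}(B_{R_0}(y'))$, and iterating \cref{thm:roeunion} at most $C(D)$ times shows that $\{p^{-1}(B_D(y))\}$ has asymptotic dimension at most $m$ uniformly --- the iteration worsens only the diameter bounds, not the number of colours, and those are in any case allowed to depend on $r$. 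With this substitution your count $(n+1)(m+1)-1$ is genuinely independent of $r$ and the proof is complete. This is also consistent with how the lemma is applied in the paper: in both invocations (\cref{lem:unipotent} and \cref{prop:linear}) the preimage of $B_R(y)$ is explicitly exhibited as a union of boundedly many copies of a fixed fibre family, so the required $R$-independence of the dimension bound is verified there by hand. You should either add the bounded-geometry patch above or state explicitly that you are using the hypothesis in this uniform form, since as written your proof and the lemma's hypotheses do not quite match.
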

This is \cite[Lemma 9.16]{roe} for metric families instead of metric spaces. The proof is the same.
\begin{lemma}
\label{thm:roeunion}
Let $\mcX=\{U_\alpha\cup V_\alpha\}_{\alpha\in A}$ be a metric family. Then
\[\asdim \mcX=\max\{\asdim\{U_\alpha\}_{\alpha},\asdim\{V_\alpha\}_{\alpha\in A}\}.\]
\end{lemma}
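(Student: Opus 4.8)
The plan is to read the statement as the uniform, metric-family version of the finite union theorem for asymptotic dimension, and to prove the two inequalities separately. Throughout, $\asdim\mcX\leq n$ means that for every $r>0$ the family admits, with a diameter bound independent of $\alpha$, a decomposition of each of its members into $n+1$ colour classes of $r$-disjoint sets, as in the definition of finite asymptotic dimension uniformly.

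For the inequality $\asdim\mcX\geq\max\{\asdim\{U_\alpha\}_\alpha,\asdim\{V_\alpha\}_\alpha\}$ I would argue by monotonicity under passage to subfamilies. Since $U_\alpha\subseteq U_\alpha\cup V_\alpha$, the family $\{U_\alpha\}_\alpha$ is a subfamily of $\mcX$, and likewise $\{V_\alpha\}_\alpha$. If $\asdim\mcX\leq n$, then for a given $r$ each $U_\alpha\cup V_\alpha$ decomposes into $n+1$ families of $r$-disjoint, uniformly bounded sets; intersecting every such set with $U_\alpha$ yields a decomposition of $U_\alpha$ with the same number of colours and no larger diameter bound, because intersection preserves $r$-disjointness and does not increase diameters. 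Hence $\asdim\{U_\alpha\}_\alpha\leq n$, and symmetrically for $\{V_\alpha\}_\alpha$, which gives the inequality.

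The substantial direction is $\asdim\mcX\leq n$ for $n:=\max\{\asdim\{U_\alpha\}_\alpha,\asdim\{V_\alpha\}_\alpha\}$. Fix $r>0$. Using $\asdim\{U_\alpha\}_\alpha\leq n$ I would produce, at a suitable scale, $n+1$ uniformly bounded $r$-disjoint families covering each $U_\alpha$, with a diameter bound $D$ independent of $\alpha$; using $\asdim\{V_\alpha\}_\alpha\leq n$ I would similarly cover each $V_\alpha$ by $n+1$ uniformly bounded families, chosen at a much larger disjointness scale $R\gg D$. I would then merge these two $(n+1)$-colourings into a single $(n+1)$-colouring of $U_\alpha\cup V_\alpha$: enlarge each member of the $V$-colouring so as to swallow the nearby members of the $U$-colouring, and keep the remaining (far-from-$V_\alpha$) members of the $U$-colouring in their original colour. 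As every constant entering this construction depends only on $n$, $r$, $D$ and the $V$-bound, and not on $\alpha$, the resulting colourings are uniform in $\alpha$, so $\asdim\mcX\leq n$.

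The hard part will be exactly this merging step, which is the content of the finite union theorem for asymptotic dimension: the difficulty is to stay within $n+1$ colours, rather than the $2n+2$ colours of the naive superposition, while preserving $r$-disjointness. Two points are delicate: (i) arranging $R\gg D$ so that enlarging the $V$-members to absorb adjacent $U$-members leaves the enlarged members of a fixed colour still $r$-disjoint and uniformly bounded; and, more seriously, (ii) controlling the interface between the region covered by the enlarged $V$-colouring and the region left to the $U$-colouring, so that same-colour sets coming from the two different colourings do not approach within distance $r$ while no point of $U_\alpha\cup V_\alpha$ is left uncovered — a genuine obstruction, since a crude split at a single threshold leaves a zero gap at the boundary and a shifted threshold leaves a buffer uncovered. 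Once these estimates are carried out with constants independent of $\alpha$, the metric-family statement follows from the single-space argument applied uniformly, and it is precisely this uniformity that must be added to the classical argument.
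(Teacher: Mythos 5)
Your proposal is correct and takes essentially the same route as the paper, which gives no argument of its own but simply points to the finite union theorem \cite[Proposition 9.13]{roe} and observes that the standard absorption proof goes through verbatim for families because every constant ($D$, $R$, the number of colours) is uniform in $\alpha$. One remark: the ``interface'' difficulty you flag in (ii) is not actually an obstruction for the construction you describe, since the dichotomy is made set-by-set rather than by a spatial threshold --- each $U$-piece of colour $i$ is either absorbed into the (unique, by $R$-disjointness) nearby $V$-piece of colour $i$ or kept unchanged, so coverage is automatic and a kept piece is $r$-separated from every enlarged same-colour $V$-piece because it is $r$-separated both from the original $V$-piece and, by the $r$-disjointness of its own colour class, from all absorbed $U$-pieces.
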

This is \cite[Proposition 9.13]{roe} for metric families instead of metric spaces. The proof is the same.

In the next section it will be more convenient to work with pseudometrics instead of metrics, i.e. allowing $d(x,y)=0$ for $x\neq y$. Finite asymptotic dimension and FDC are defined in the same way for pseudometrics. If $d$ is a pseudometric on $X$, then we can define a metric $d'$ on $X$ by setting $d'(x,y):=\max\{1,d(x,y)\}$ for $x\neq y$. The metric $d'$ is proper resp. left-invariant if and only if $d$ is. It has finite asymptotic dimension resp. FDC if and only if $d$ does. Therefore, to show that a group has finite asymptotic dimension or FDC, it suffices to show this for $G$ equipped with a left-invariant proper pseudometric.
\begin{notation}
If $G$ is a group, then by $\{F\backslash G\}_{F\in\Fin}$ we will always mean the family of quotients by all finite subgroups of $G$, i.e. $\Fin$ will always refer to the family of finite subgroups of the group of which we take the quotients.
\end{notation}
\section{Linear groups over fields of positive characteristic}
In this section $K$ will always denote a field of positive characteristic. Every finitely generated subgroup $G$ of $GL_n(K)$ has finite asymptotic dimension by \cite[Theorem 3.1]{rigidity}. Here we want to show that even the family $\{F\backslash G\}_{F\in\Fin}$ has finite asymptotic dimension uniformly. We begin by recalling the argument from \cite{rigidity}. 

A \emph{length function} on a group $G$ is function $l\colon G\to[0,\infty)$ such that for all $g,h\in G$
\begin{enumerate}
\item $l(e)=0$,
\item $l(g)=l(g^{-1})$, and
\item $l(gh)\leq l(g)+l(h)$.
\end{enumerate}
We do not require that $l$ is proper, nor that $l(g)=0$ if and only if $g=e$. By setting $d(g,h):=l(g^{-1}h)$ we obtain a pseudometric.

A \emph{discrete norm} on a field $K$ is a map $\gamma\colon K\to[0,\infty)$ satisfying that for all $x,y\in K$ we have
\begin{enumerate}
\item $\gamma(x)=0$ if and only if $x=0$,
\item $\gamma(xy)=\gamma(x)\gamma(y)$,
\item $\gamma(x+y)\leq \max\{\gamma(x),\gamma(y)\}$
\end{enumerate}
and that the range of $\gamma$ on $K\setminus\{0\}$ is a discrete subgroup of the multiplicative group $(0,\infty)$.

Following \cite{GHWNovikovlinear}, we obtain for every discrete norm $\gamma$ on $K$ a length function $l_\gamma$ on $GL_n(K)$ by
\[l_\gamma(g)=\log\max_{i,j}\{\gamma(g_{ij}),\gamma(g^{ij})\},\]
where $g_{ij}$ and $g^{ij}$ are the matrix coefficients of $g$ and $g^{-1}$, respectively.
By \cite[Propostion 5.2.4]{FDC} the group $GL_n(K)$ equipped with the pseudometric $d(g,h)=l_\gamma(g^{-1}h)$ has finite asymptotic dimension for every discrete norm $\gamma$. Let us review the proof.

The subset $\mathcal O:=\{x\in K\mid \gamma(x)\leq 1\}$ is a subring of $K$ called the \emph{ring of integers} and $\mathfrak m:=\{x\in K\mid \gamma(x)<1\}$ is a principal ideal in $\mathcal O$. Let $\pi$ be a fixed generator of $\mathfrak m$ and let $D$ denote the subgroup of diagonal matrices with powers of $\pi$ on the diagonal. Let $U$ denote the unipotent upper triangular matrices. By \cite[Lemma 5.2.5]{FDC} the group $U$ has asymptotic dimension zero.
We have $D\cong\bbZ^n$ and the restriction of $l_\gamma$ to $D$ is given by
\[l_\gamma(a):=\max_i |k_i|\log \gamma(\pi^{-1})\]
where $a$ is the diagonal matrix with entries $\pi^{k_i}$ on the diagonal. The group $D$ therefore is quasi-isometric to $\bbZ^n$ with the standard metric and has asymptotic dimension $n$. Setting $T:=DU$, $T$ is again a subgroup of $GL_n(K)$ and $U\leq T$ is normal. Considering the extension $1\to U\to T\to D\to 1$, we see that $T$ has finite asymptotic dimension.

Let $H$ be the subgroup of those $g\in GL_n(F)$ for which the entries of $g$ and $g^{-1}$ are in $\mathcal O$. Then $GL_n(K)=TH$ by \cite[Lemma 5]{GHWNovikovlinear}. For $h\in H$ let $h_{ij}$ and $h^{ij}$ denote the matrix coefficients of $h$ and $h^{-1}$ respectively. By definition $\gamma(h_{ij}),\gamma(h^{ij})\leq 1$ and thus \[0\leq l_\gamma(h)=\log\max_{ij}\{\gamma(h_{ij}),\gamma(h^{ij})\}\leq 0.\]
This implies that the inclusion $T\to GL_n(K)$ is isometric and metrically onto, i.e. for every $g\in GL_n(K)$ there exists a $t\in T$ with $d(g,t)=0$. Hence, $GL_n(K)$ has finite asymptotic dimension with respect to the pseudometric $d$.

\begin{lemma}
\label{lem:unipotent}
For every discrete norm the family $\{F\backslash GL_n(K)\}_F$, where $F$ ranges over all finite subgroups of $U$, has finite asymptotic dimension uniformly with respect to the associated pseudometric.
\end{lemma}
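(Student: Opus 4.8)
The plan is to transport the problem to the genuine subgroup $T=DU$, fibre over the diagonal part $D\cong\bbZ^n$ by means of \cref{thm:roe}, and thereby concentrate all the difficulty in the behaviour of the quotients of $U$. Equip $F\backslash GL_n(K)$ with the quotient pseudometric $\bar d(Fg,Fg')=\inf_{f\in F}l_\gamma(g^{-1}fg')$. Since $F\le U\le T$ and the inclusion $T\to GL_n(K)$ is isometric and metrically onto, the induced map $F\backslash T\to F\backslash GL_n(K)$, $Ft\mapsto Ft$, is again isometric and metrically onto, with the same bounds for every $F$. Hence it suffices to prove that $\{F\backslash T\}_F$ has finite asymptotic dimension uniformly.

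The projection $p\colon T\to T/U=D$ is a homomorphism with $p(f)=e$ for $f\in U$, so it descends to a map $\bar p_F\colon F\backslash T\to D$. Because every element of $T$ is upper triangular, the diagonal of a product is the product of the diagonals, and $l_\gamma(g)\ge\log\gamma(g_{ii})$ for each $i$; this shows $p$, and hence each $\bar p_F$, is $1$-Lipschitz. I would then apply \cref{thm:roe} with $\mcX=\{F\backslash T\}_F$, $\mcY=\{D\}$ and $P=\{\bar p_F\}_F$: the single space $D\cong\bbZ^n$ has finite asymptotic dimension, so it remains to control preimages of balls. As $D$ is quasi-isometric to $\bbZ^n$, a ball $B_R(d)$ meets $D$ in at most $M(R)$ points, uniformly in $d$, so $\bar p_F^{-1}(B_R(d))$ is a union of at most $M(R)$ fibres, and by \cref{thm:roeunion} it is enough to bound a single fibre uniformly. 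A short computation identifies the fibre over $d'\in D$ isometrically with $F'\backslash U$, where $F'=d'^{-1}Fd'$ is again a finite subgroup of $U$: one uses $U\trianglelefteq T$ together with the isometry $d'u\mapsto u$ of $d'U$ onto $U$, under which the left $F$-action becomes the left $F'$-action. Everything therefore reduces to the following claim.

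\textbf{Main point:} the family $\{F'\backslash U\}_{F'}$ has asymptotic dimension $0$ uniformly. This is where positive characteristic enters, through the fact (\cite[Lemma 5.2.5]{FDC}) that $U$ has asymptotic dimension $0$. The idea is to use a \emph{canonical} decomposition: for $\rho>0$ let the pieces of $U$ be the equivalence classes of the relation generated by $u\sim u'$ whenever $d(u,u')\le\rho$ (the ``$\rho$-components''). Asymptotic dimension $0$ means precisely that these classes have diameter at most some $S(\rho)$, and by construction any two distinct classes have pairwise distance exceeding $\rho$. Since left translation by each $f\in F'$ is an isometry, it permutes the $\rho$-components, so this decomposition is automatically $F'$-invariant and descends to $F'\backslash U$. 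Each image $q_{F'}(C)$ has diameter $\le S(\rho)$, and two images coming from components in different $F'$-orbits stay more than $\rho$ apart, because every $F'$-translate of a component is again a component distinct from the first. Choosing $\rho$ just above $r$ gives, for each $r$, an $r$-disjoint decomposition of $F'\backslash U$ into pieces of diameter $\le S(\rho)$, with a bound depending on $r$ but not on $F'$. Feeding this back through \cref{thm:roeunion} and \cref{thm:roe} yields finite asymptotic dimension uniformly for $\{F\backslash T\}_F$, and the first paragraph transports it to $\{F\backslash GL_n(K)\}_F$.

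I expect the main obstacle to be exactly this last step. The naive pushforward of an asymptotic-dimension-$0$ decomposition of $U$ need not remain $r$-disjoint after passing to the quotient, and since the finite groups $F'$ have unbounded diameter one cannot repair this by a crude translation estimate. The point that makes the argument go through is that the $\rho$-component decomposition is intrinsic, hence invariant under all isometries, and therefore survives the quotient with constants independent of $F'$; notably this uses only $\asdim U=0$ and nothing about the finer metric structure of $l_\gamma$ on $U$.
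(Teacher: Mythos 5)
Your proposal is correct and follows essentially the same route as the paper: reduce to $F\backslash T$ via the isometric, metrically onto inclusion, fibre over $D$ with \cref{thm:roe} and \cref{thm:roeunion}, identify the fibres with conjugated quotients $(F)^{d'}\backslash U$, and conclude by showing $\{F\backslash U\}_F$ has asymptotic dimension zero uniformly using the canonical partition of $U$ into $R$-connected components, which the isometric $F$-action permutes. The point you flag as the "main obstacle" is exactly the argument the paper gives, in the same form.
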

\begin{proof}
Let $F$ be a finite subgroup of $U$. Then we can consider the map
\[F\backslash T\xrightarrow{\rho_F}D.\]
We want to apply \cref{thm:roe} to the family $\{\rho_F\colon F\backslash T\xrightarrow{\rho}D\}_{F\leq U~fin.}$. For this we have to show that for every $R>0$ the family $\{\rho_F^{-1}(B_R(d))\}_{d\in D,F\leq U~fin.}$ has finite asymptotic dimension uniformly. 
The preimage $\rho_F^{-1}(d)=\{Fud\mid u\in U\}$ of a point $d\in D$ is isometric to $(F)^d\backslash U$ by mapping $Fud$ to $d^{-1}Fdd^{-1}ud$, where $(F)^d:=\{d^{-1}fd\mid f\in F\}$. Therefore, the preimage of $B_R(d)$ for any $R>0$ is a finite union of spaces isometric to spaces of the form $(F)^{d'}\backslash U$ with $d'\in D$. The number of spaces appearing in this union only depends on $R$ and not on $d$ (or $F$). Thus by \cref{thm:roeunion}
\[\asdim\{\rho_F^{-1}(B_R(d))\}_{d\in D,F\leq U~fin.}=\asdim\{F\backslash U\}_{F\leq U~fin.}.\]

Since the inclusion $F\backslash T\to F\backslash GL_n(K)$ is isometric and metrically onto, to prove the lemma it remains to show that the family $\{F\backslash U\}_{F\leq U~fin.}$ has asymptotic dimension zero uniformly.

Let $R>0$ be given and let $\mcS$ denote the partition of $U$ into $R$-connected components, i.e. two elements $u,u'\in U$ lie inside the same $S\in\mcS$ if and only if there exists a sequence $u_0,\ldots,u_n$ with $u=u_0$, $u'=u_n$ and $d(u_{i-1},u_{i})\leq R$ for all $i=1,\ldots,n$. Since $U$ has asymptotic dimension zero we have that $r:=\sup_{S\in\mcS}\diam S<\infty$. Since the left action of $F$ on $U$ is isometric, if $fu=u'$ for some $f\in F,u,u'\in U$, then $f$ maps the $R$-connected component of $u$ bijectively onto the $R$-connected component of $u'$. This implies that every $R$-connected component of $F\backslash U$ is a quotient of an $R$-connected component of $U$ and in particular has diameter at most $r$. Therefore, the family $\{F\backslash U\}_{F\leq U~fin.}$ has asymptotic dimension zero uniformly as claimed.
\end{proof}
\begin{prop}
\label{prop:norms}
Let $G\leq GL_n(K)$ be a finitely generated subgroup. Then for every discrete norm $\gamma$ the family $\{F\backslash G\}_{F\in\Fin}$ has finite asymptotic dimension uniformly with respect to the associated pseudometric.
\end{prop}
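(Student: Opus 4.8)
The plan is to deduce the statement from \cref{lem:unipotent} by two reductions followed by a fibering argument. First I would reduce to the case $G=GL_n(K)$. For a finite subgroup $F\le G$ the inclusion induces an isometric embedding $F\backslash G\hookrightarrow F\backslash GL_n(K)$, and a subfamily of a family with finite asymptotic dimension uniformly again has finite asymptotic dimension uniformly; so it suffices to show that $\{F\backslash GL_n(K)\}_{F\in\Fin}$ has finite asymptotic dimension uniformly. Next I would reduce to finite subgroups contained in $H=GL_n(\mathcal{O})$. Since $\mathcal{O}$ is a discrete valuation ring, hence a principal ideal domain, every finite subgroup $F$ stabilises the full lattice $\sum_{f\in F}f\mathcal{O}^n$, which is free of rank $n$; choosing a basis yields $g\in GL_n(K)$ with $g^{-1}Fg\le H$. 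Moreover, for any $g$ the left translation $x\mapsto g^{-1}x$ is an isometry of $GL_n(K)$ intertwining the left $F$-action with the left $g^{-1}Fg$-action, so it descends to an isometry $F\backslash GL_n(K)\cong (g^{-1}Fg)\backslash GL_n(K)$. Hence every member of the family is isometric to one with $F\le H$, and it suffices to treat finite subgroups $F\le H$.

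For the main step I would apply \cref{thm:roe} to the family of maps $p_F\colon F\backslash GL_n(K)\to D^+$ induced by the Cartan projection $c$ sending $g$ to its tuple of elementary divisor exponents (Smith normal form over $\mathcal{O}$), where $D^+\subseteq D$ denotes the dominant cone. The map $c$ is bi-$H$-invariant, so it descends to $F\backslash GL_n(K)$ because $F\le H$; since $c(fg)=c(g)$ one checks that $p_F$ is Lipschitz with a constant independent of $F$. The target $D^+$ sits inside $D\cong\bbZ^n$ and so has finite asymptotic dimension. As the metric on $D$ is, up to scaling, the $\ell^\infty$-metric, the preimage $c^{-1}(B_R(\lambda_0))$ is the union of the finitely many double cosets $H\pi^\lambda H$ with $\lambda\in B_R(\lambda_0)\cap D^+$, whose number is bounded in terms of $R$ alone. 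By \cref{thm:union} it therefore remains to prove that the family $\{F\backslash H\pi^\lambda H\}_{F\le H\ \mathrm{fin},\ \lambda\in D^+}$ has finite asymptotic dimension uniformly.

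Finally I would reduce these double-coset quotients to the unipotent situation governed by \cref{lem:unipotent}. The point is that conjugation by $\pi^{-\lambda}$ scales the strictly upper triangular entries of $H$ up and the strictly lower entries down, so that $H\pi^\lambda H$ is, coarsely and uniformly in $\lambda$, modelled on a piece of the unipotent group $U$ together with a bounded, $\mathcal{O}$-sized factor; the left $F$-action should then be controlled exactly as the action on $F\backslash U$ is controlled in the proof of \cref{lem:unipotent}, via the partition into $R$-connected components and the uniform diameter bound coming from $\asdim U=0$.

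The main obstacle is precisely this last uniformity. A general finite subgroup $F\le H$ is \emph{not} conjugate into $U$ (for instance $GL_2(\bbF_p)\le GL_2(\mathcal{O})$ is non-solvable for $p\ge 5$), so one cannot simply invoke \cref{lem:unipotent}; instead one must bound the asymptotic dimension of each quotient $F\backslash H\pi^\lambda H$ simultaneously over all finite $F$ and all $\lambda$. I expect this to be handled by combining the conjugation isometry of the first step, applied within each double coset, with the $R$-connected-component argument of \cref{lem:unipotent}, the key being that the diameter of an $R$-connected component of the relevant unipotent model is bounded independently of both $F$ and $\lambda$.
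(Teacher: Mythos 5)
Your argument has a genuine gap at its central step, and the reductions you make beforehand discard exactly the hypothesis that the paper's proof relies on. After conjugating into $H=GL_n(\mathcal O)$ you are left having to show that $\{F\backslash H\pi^\lambda H\}$ has finite asymptotic dimension uniformly over \emph{all} finite subgroups $F\leq H$ and all $\lambda$; as you yourself note, such $F$ (e.g.\ $GL_2(\bbF_p)\leq GL_2(\mathcal O)$) need not be conjugate into $U$, so \cref{lem:unipotent} does not apply, and your final paragraph is an expectation rather than a proof. Nothing in the sketch explains how the $R$-connected-component argument --- which works for $U$ because $\asdim U=0$ and $F$ acts by isometries on $U$ itself --- would control the quotient of a double coset by a non-unipotent finite group. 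Moreover, your very first reduction, to the family $\{F\backslash GL_n(K)\}$ over arbitrary finite subgroups, throws away the finite generation of $G$, which is the one hypothesis the paper actually uses at this point.

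The paper's proof avoids non-unipotent finite subgroups altogether. By Alperin's theorem \cite{alperinselberg} (the positive-characteristic analogue of Selberg's lemma), the finitely generated group $G$ has a normal subgroup $G'$ of finite index $N$ in which every finite subgroup is unipotent, hence conjugate in $GL_n(K)$ to a subgroup $F'\leq U$; writing the conjugating element as $th$ with $t\in T$, $h\in H$ and using $U\trianglelefteq T$, one may take the conjugator in $H$, where it has length zero and therefore induces an isometry $F'\backslash GL_n(K)\cong F\backslash GL_n(K)$. This places $\{F\backslash G\}_{F\leq G'\ \mathrm{fin.}}$ within reach of \cref{lem:unipotent}. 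A general finite $\tilde F\leq G$ is then handled by noting that $F:=\tilde F\cap G'$ has index at most $N$ in $\tilde F$ and projecting the covers witnessing finite asymptotic dimension of $F\backslash G$ down to $\tilde F\backslash G$. To repair your proposal you would need either to import this use of Alperin's theorem (after which your double-coset machinery becomes unnecessary) or to supply a genuinely new argument for arbitrary finite subgroups of $GL_n(\mathcal O)$; the latter is not contained in your sketch.
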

\begin{proof}
By the main theorem of Alperin \cite{alperinselberg} there exists a normal subgroup $G'\leq G$ with index $[G:G']=:N<\infty$ such that every finite subgroup of $G'$ is unipotent. Therefore, every finite subgroup $F\leq G'$ is conjugate in $GL_n(K)$ to a finite subgroup $F'\leq U$. Let $g=th$ with $t\in T,h\in H$ be such that $g^{-1}F'g=F$. Since $U$ is normal in $T$, we have that $t^{-1}F't\leq U$ and we can assume $g\in H$ and in particular $l_\gamma(g)=0$. This implies that conjugation by $g$ is an isometry and induces an isometry between $F'\backslash GL_n(K)$ and $F\backslash GL_n(K)$. By \cref{lem:unipotent} the family $\{F'\backslash GL_n(K)\}_{F'\leq U~fin.}$ has finite asymptotic dimension uniformly and by the above isometry therefore the family $\{F\backslash GL_n(K)\}_{F\leq G'~fin.}$ also has finite asymptotic dimension uniformly. This also holds for the subfamily $\{F\backslash G\}_{F\leq G'~fin.}$. Since $[G:G']=N$ every finite subgroup $\tilde F$ of $G$ has a normal subgroup $F$ of index at most $N$ lying in $G'$. The quotient group $F\backslash \tilde F$ acts isometrically on $F\backslash G$. Thus, projecting the covers that give finite asymptotic dimension for $\{F\backslash G\}_{F\leq G'~fin.}$ down to the quotient $\{\tilde F\backslash G\}_{\tilde F\in\Fin}$ shows that this family still has finite asymptotic dimension uniformly.
\end{proof}
\begin{thm}
\label{prop:linear}
Let $G\leq GL_n(K)$ be a finitely generated subgroup. There exists a proper, left-invariant metric on $G$ such that the family $\{F\backslash G\}_{F\in\Fin}$ has finite asymptotic dimension uniformly.
\end{thm}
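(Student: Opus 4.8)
The plan is to deduce the theorem from \cref{prop:norms} by combining finitely many discrete norms into a single proper pseudometric. Since $G$ is finitely generated, the entries of a finite generating set together with the entries of their inverses generate a finitely generated subfield $K_0\subseteq K$ of positive characteristic, and $G\leq GL_n(K_0)$. As in the argument of \cite{GHWNovikovlinear}, I would choose finitely many discrete norms $\gamma_1,\ldots,\gamma_m$ on $K_0$ such that the length function $l:=\sum_{i=1}^m l_{\gamma_i}$ is proper on $G$, i.e.\ $\{g\in G\mid l(g)\leq C\}$ is finite for every $C$; the point is that, after fixing a transcendence basis, finitely many places control denominators and degrees, and the finiteness of the prime field forces the elements of $K_0$ of bounded valuation at all chosen places to form a finite set. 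Writing $d(g,h):=l(g^{-1}h)$ and passing to the associated metric $\max\{1,d(\cdot,\cdot)\}$ yields a proper left-invariant metric on $G$, so it remains to show that $\{F\backslash G\}_{F\in\Fin}$ has finite asymptotic dimension uniformly with respect to $d$.

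For this I would prove by induction on $k\in\{1,\ldots,m\}$ that $\{F\backslash G\}_{F\in\Fin}$ has finite asymptotic dimension uniformly with respect to the pseudometric $d^{(k)}$ given by $d^{(k)}(g,h):=\sum_{i=1}^k l_{\gamma_i}(g^{-1}h)$ and its induced quotient pseudometrics. The base case $k=1$ is precisely \cref{prop:norms}. For the step from $k-1$ to $k$, consider the family $P$ of identity maps $(F\backslash G,d^{(k)})\to(F\backslash G,d_{\gamma_k})$, where $d_{\gamma_k}(g,h)=l_{\gamma_k}(g^{-1}h)$; each is $1$-Lipschitz because $d^{(k)}\geq d_{\gamma_k}$, and the target family has finite asymptotic dimension uniformly by \cref{prop:norms}. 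On the preimage $P^{-1}(B_R(y))$ of an $R$-ball the coordinate $d_{\gamma_k}$ varies by at most $2R$, so on this set $d^{(k-1)}\leq d^{(k)}\leq d^{(k-1)}+2R$; hence $P^{-1}(B_R(y))$ carries, up to an additive constant, the metric $d^{(k-1)}$, and therefore has the same asymptotic dimension in either metric. As these preimages form a subfamily of $\{F\backslash G\}_{F\in\Fin}$ equipped with $d^{(k-1)}$, the inductive hypothesis shows that for each fixed $R$ they have finite asymptotic dimension uniformly. Applying \cref{thm:roe} to $P$ then gives finite asymptotic dimension uniformly for $\{F\backslash G\}_{F\in\Fin}$ with respect to $d^{(k)}$. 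Taking $k=m$ and using that finite asymptotic dimension uniformly is unaffected by the passage from the pseudometric $d$ to its associated metric completes the proof.

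The step I expect to require the most care is the selection of the finitely many norms making $l$ proper, which is the genuinely arithmetic input and relies on the finiteness of the prime field; once these norms are fixed, the two remaining observations---that a bounded perturbation of the metric and the passage to a subfamily both preserve finite asymptotic dimension uniformly---are routine, and the fibering lemma \cref{thm:roe} assembles them into the inductive step.
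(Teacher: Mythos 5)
Your reduction to a finitely generated field and the choice of finitely many discrete norms making $l=\sum_i l_{\gamma_i}$ proper match the paper (which cites \cite[Proposition 3.4]{rigidity} for exactly this), and the idea of bootstrapping from \cref{prop:norms} via \cref{thm:roe} is also the right general shape. But the inductive step contains a genuine gap at the point where you claim that on $P^{-1}(B_R(y))$ one has $d^{(k-1)}\leq d^{(k)}\leq d^{(k-1)}+2R$. This inequality is correct for the pseudometrics on $G$ itself, but the spaces in your family are the quotients $F\backslash G$, and the quotient pseudometric of a sum is not the sum of the quotient pseudometrics: $d^{(k)}_{F\backslash G}(Fx,Fx')=\min_{f\in F}\bigl(d^{(k-1)}(x,fx')+d_{\gamma_k}(x,fx')\bigr)$, and the element $f$ minimizing $d^{(k-1)}(x,fx')$ need not be the one making $d_{\gamma_k}(x,fx')$ small. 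Concretely, one can have $d^{(k-1)}_{F\backslash G}(Fx,Fx')=0$ and $d_{\gamma_k,F\backslash G}(Fx,Fx')=0$ (so both points lie in a small ball for the target metric) while $d^{(k)}_{F\backslash G}(Fx,Fx')$ is arbitrarily large, because the two distances are realized by different elements of $F$. So the preimage of an $R$-ball is \emph{not} a bounded perturbation of the corresponding subspace of $(F\backslash G,d^{(k-1)})$, and the inductive step collapses.

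This failure is precisely the difficulty the paper's proof is organized around. There one embeds $(G,d)$ diagonally and isometrically into $(GL_n(K),d_{\gamma_1})\times\cdots\times(GL_n(K),d_{\gamma_q})$, and studies the map $F\backslash(G\times\cdots\times G)\xrightarrow{p}F\backslash G\times\cdots\times F\backslash G$ from the quotient by the \emph{diagonal} $F$-action to the product of quotients. The target has finite asymptotic dimension uniformly by \cref{prop:norms}, but the fibers of $p$ are nontrivial: the preimage of a product of balls is a controlled finite union of spaces of the form $F\backslash(Fg_1'\times\cdots\times Fg_q')$, and showing these have asymptotic dimension zero uniformly requires a further argument (Alperin's theorem to pass to a normal unipotent subgroup $F'\leq F$ of uniformly bounded index, conjugating $F'$ into $U$, and the asymptotic-dimension-zero argument from \cref{lem:unipotent}). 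That fiber analysis is exactly the content your bounded-perturbation claim tries to bypass, so you would need to add it (or something equivalent) to make the proof work.
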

\begin{proof}
The subring of $K$ generated by the matrix entries of a finite generating set for $G$ is a finitely generated domain $A$ with $G\leq GL_n(A)$ and we may replace $K$ by the (finitely generated) fraction field of $A$, thus we can assume that $K$ is a finitely generated field of positive characteristic. By \cite[Proposition 3.4]{rigidity} for every finitely generated subring $A$ of $K$ there exists a finite set $N_A$ of discrete norms such that for every $k\in\bbN$ the set
\[B_A(k)=\{a\in A\mid \forall \gamma\in N_A:\gamma(a)\leq e^k\}\]
is finite. Let $A$ again be the subring generated by the matrix entries of a finite generating set for $G$ and $N_A=\{\gamma_1,\ldots,\gamma_q\}$ the finite set of discrete norms as above. Consider the length function $l:=l_{\gamma_1}+\ldots+l_{\gamma_q}$. The pseudometric on $G$ defined by $d(g,g'):=l(g^{-1}g')$ now is proper and left-invariant, and the diagonal embedding
\[(G,d)\to (GL_n(K),d_{\gamma_1})\times\ldots\times (GL_n(K),d_{\gamma_q})\]
is isometric when the product is given the sum metric.
It suffices to show that the family
\[\{F\backslash ((G,d_{\gamma_1})\times\ldots\times (G,d_{\gamma_q}))\}_{F\leq G~fin.}\]
has finite asymptotic dimension uniformly.
Now let $F\leq G$ be finite and consider the projection
\[F\backslash(G\times\ldots\times G)\xrightarrow{p} F\backslash G\times\ldots\times F\backslash G\]
using the same metrics as above. The image has finite asymptotic dimension uniformly in $F$ by \cref{prop:norms} and using \cref{thm:roe} it suffices to show that the preimage of $B_R(Fg_1)\times\ldots\times B_R(Fg_n)$ under $p$ has finite asymptotic dimension uniformly. The preimage is a finite union of metric spaces of the form $F\backslash (Fg'_1\times Fg'_n)$ and the number of the spaces appearing in the union only depends on $R$ not on $F$ or $g_1,\ldots,g_n$.
By the main theorem of \cite{alperinselberg} there exists a normal subgroup $G'\unlhd GL_n(A)$ with index $[GL_n(A):G']=:N<\infty$ such that every finite subgroup of $G'$ is unipotent. In particular, we have a normal unipotent subgroup $F':=G'\cap F$ of $F$ of index at most $N$. The space $Fg'_1\times\ldots\times Fg'_n$ is a union of at most $N$ subspaces isometric to $F'g'_1\times\ldots\times F'g_n'$ and as in the proof of \cref{prop:norms} there exists an isometry of these to $F_1\times\ldots\times F_n$ with $F_i\leq U$. By \cref{thm:roeunion} this shows that $Fg'_1\times\ldots\times Fg'_n$ has asymptotic dimension zero uniformly in $F$. As in the proof of \cref{lem:unipotent}, we see that $F\backslash(Fg'_1\times\ldots\times Fg'_n)$ also has asymptotic dimension zero. This completes the proof of \cref{prop:linear}.
\end{proof}
\begin{rem}
Note that the family $\{F\backslash G\}_{F\in\mcF in}$ has finite asymptotic dimension uniformly for some proper, left-invariant (pseudo)metric on $G$ if and only if it has finite asymptotic dimension for every proper, left-invariant metric on $G$.
\end{rem}
\section{Linear groups over commutative rings with unit}
\begin{lemma}
\label{lem:prod}
Let $H_1,H_2$ be groups such that $\{F\backslash H_i\}_{F\in\Fin}$ has FDC for $i=1,2$. Then $\{F\backslash(H_1\times H_2)\}_{F\in\Fin}$ has FDC.
\end{lemma}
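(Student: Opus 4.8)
The plan is to prove the statement by two nested applications of the Fibering Theorem \cite[Fibering Theorem 3.1.4]{FDC}, peeling off one factor at a time so as to reduce the FDC of $\{F\backslash(H_1\times H_2)\}_{F\in\Fin}$ to that of the families $\{F'\backslash H_i\}_{F'\in\Fin}$, which are assumed to have FDC. For a finite subgroup $F\leq H_1\times H_2$ write $F_i:=p_i(F)\leq H_i$ for the images under the two projections and $K_2:=F\cap(\{e\}\times H_2)\leq H_2$. The projection $H_1\times H_2\to H_1$ is $1$-Lipschitz and $F$-equivariant, so it descends to a $1$-Lipschitz, hence uniformly expansive, map of metric families $\bar p_1\colon\{F\backslash(H_1\times H_2)\}_{F\in\Fin}\to\{p_1(F)\backslash H_1\}_{F\in\Fin}$. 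Since every $F_1$ is a finite subgroup of $H_1$, the target is a subfamily of $\{F'\backslash H_1\}_{F'\in\Fin}$ and therefore has FDC. By the Fibering Theorem it then suffices to show that for each $R>0$ the family of preimages $\{\bar p_1^{-1}(B_R(F_1 h_1))\}$ has FDC.

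First I would rewrite such a preimage. Using the $F$-action to move the first coordinate into a single ball, every $F$-orbit meeting $\bar p_1^{-1}(B_R(F_1 h_1))$ has a representative with first coordinate in $B_R(h_1)$, so this preimage is the image under the quotient map of $B_R(h_1)\times H_2$. On it the second projection induces a $1$-Lipschitz map $\bar p_2$ to the family $\{p_2(F)\backslash H_2\}_{F\in\Fin}$, again a subfamily of $\{F'\backslash H_2\}_{F'\in\Fin}$ and hence of FDC. Applying the Fibering Theorem a second time reduces the problem to the preimages under $\Phi:=(\bar p_1,\bar p_2)$ of products $B_R(F_1 h_1)\times B_S(F_2 k)$; after the same normalisation of the first coordinate, such a set is the image of $B_R(h_1)\times(F_2\cdot B_S(k))$ under the quotient map.

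The heart of the argument is to show that these doubly-bounded preimages form a family which is \emph{uniformly} coarsely equivalent to a subfamily of $\{F'\backslash H_2\}_{F'\in\Fin}$; since FDC is a coarse invariant and is inherited by subfamilies, this would finish the proof. The mechanism is that, once the first coordinate is confined to the single ball $B_R(h_1)$, only those elements $(f_1,f_2)\in F$ whose component $f_1$ displaces $B_R(h_1)$ by $O(R)$ can contribute to the identifications, so the residual relation on the second coordinate is generated by a finite subset of $F_2$ containing $K_2$; the bounded first coordinate then contributes only bounded distortion, and the preimage becomes coarsely a quotient of $H_2$ by a finite subgroup (or a subspace of such), an object lying in the family $\{F'\backslash H_2\}_{F'\in\Fin}$. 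I expect the main obstacle to be exactly the \textbf{uniformity} of this coarse equivalence over all finite subgroups $F$ simultaneously: the orders $|F|$ are unbounded, and a finite-order element of $H_1$ or $H_2$ may have arbitrarily large conjugation-displacement, so a priori both the relevant residual subgroup of $H_2$ and the coarse-equivalence constants could depend on the centre $h_1$ and on $F$. The decisive check is therefore that confining \emph{both} coordinates to balls of radii $R$ and $S$ forces the contributing group elements to act with displacement $O(R)$ on the $H_1$-coordinate, so that the identifications descend to a single finite-subgroup quotient of $H_2$ with constants depending only on $R$ and $S$, and not on $F$ or on the chosen centres.
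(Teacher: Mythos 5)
Your overall strategy (fibering over the quotient families and then analysing the residual ball preimages) is the same as the paper's, which fibers once over $\{(p_1(F)\times p_2(F))\backslash(H_1\times H_2)\}_{F\in\Fin}$ rather than twice; up to that point your argument is fine. The gap is in the final step, which is where the actual content of the lemma sits. After confining both coordinates to balls, the residual piece is the quotient of $p_1(F)B_R(h_1)\times p_2(F)B_S(k)$ by $F$, and the identifications on the chosen representatives are induced by those $(f_1,f_2)\in F$ with $f_1\in B_{2R}(e)$ (by left-invariance, $f_1x_1=y_1$ with $x_1,y_1\in B_R(h_1)$ forces $d(f_1,e)=d(y_1,x_1)\leq 2R$). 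You observe this correctly, but it only shows that the contributing elements are \emph{generators of bounded length}; it does not show that the identification classes they generate, or the subgroup $\langle F\cap(B_{2R}(e)\times H_2)\rangle$, have uniformly bounded diameter, and that uniform bound is exactly what is needed. Your proposed conclusion, that these pieces are uniformly coarsely equivalent to subspaces of quotients $F'\backslash H_2$, does not follow: the relation on the second coordinate is induced by a scale-dependent union of cosets of $F\cap(\{e\}\times H_2)$ inside $p_2(F)$, not by a fixed finite subgroup, and the comparison map to the quotient by the subgroup generated by these elements is neither uniformly Lipschitz nor injective up to uniformly bounded error, since at measuring scale $t$ further elements of $F$ (those with first component in $B_{2R+t}(e)$) begin to contribute.

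The missing ingredient is the statement that the family of \emph{all} finite subgroups of a group with a proper left-invariant metric has asymptotic dimension zero uniformly. This is what the paper proves and uses: by properness $B_{2R}(e)$ is finite, so there are only finitely many subgroups generated by subsets of $B_{2R}(e)$ and hence a uniform bound $s_{2R}$ on the diameters of the finite ones; consequently every finite subgroup $F'$ decomposes $2R$-disjointly into cosets of $\langle F'\cap B_{2R}(e)\rangle$, each of diameter at most $s_{2R}$, and the same holds for the quotients $F\backslash F'$. Concretely, the paper uses properness to write each ball preimage as a union of boundedly many spaces $F\backslash(p_1(F)\times p_2(F))(h,h')$, identifies each with a quotient of a conjugate of the finite group $p_1(F)\times p_2(F)$ by a conjugate of $F$, and then applies the asymptotic-dimension-zero statement to the family $\{F\backslash F'\}$ over all pairs of finite subgroups $F\leq F'$ of $H_1\times H_2$, invoking \cref{thm:union} and \cref{thm:roeunion}. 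Without this step, or an equivalent substitute, your argument does not close: the ``decisive check'' you describe establishes bounded displacement of the generators, not bounded diameter of the resulting identification classes.
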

\begin{proof}
Let proper, left-invariant metrics $d_i$ on $H_i$ be given and consider $H_1\times H_2$ with the metric $d_1+d_2$.
Let $p_i\colon H_1\times H_2\to H_i$ denote the projection. Consider the uniformly expansive map 
\[\{F\backslash(H_1\times H_2)\}_{F\in\Fin}\to \{(p_1(F)\times p_2(F))\backslash (H_1\times H_2)\}_{F\in\Fin}.\]
Then the range has FDC by assumption and by the fibering theorem \cite[Theorem 3.1.4]{FDC} it suffices to show that the family 
\[\{F\backslash (p_1(F)\times p_2(F))(B_R(h_1)\times B_R(h_2))\}_{h_i\in H_i,F\leq H_1\times H_2~fin.}\]
has FDC for every $R>0$.
Every space in this family is a union of $|B_R(h_1)\times B_R(h_2)|$ many spaces of the form $F\backslash (p_1(F)\times p_2(F))(h,h')$. The number $|B_R(h_1)\times B_R(h_2)|$ only depends on $R$ not on $h_1$ and $h_2$, and every space $F\backslash (p_1(F)\times p_2(F))(h,h')$ is isometric to $(F)^{(h,h')}\backslash (p_1(F)\times p_2(F))^{(h,h')}$, where $(F)^{(h,h')}$ denotes $(h,h')^{-1}F(h,h')$ and similarly for $(p_1(F)\times p_2(F))^{(h,h')}$. By \cref{thm:union} it suffices to show that the family $\{F\backslash F'\}_{F\leq F'}$ has FDC where $F\leq F'$ ranges over all pairs of finite subgroups of $H_1\times H_2$.
Let $\mcS_R$ denote the family of finite subgroups of $H_1\times H_2$ generated by elements from $B_R(e)$ and let $s_R:=\sup_{S\in\mcS_R}\diam S$. Let $F\leq H_1\times H_2$ be finite. Then for every $R>0$ the group $F$ is the $R$-disjoint union of the cosets of $\langle F\cap B_R(e)\rangle$ and each of these has diameter at most $s_R$. We see that the family of finite subgroups of $H_1\times H_2$ has asymptotic dimension zero uniformly. This implies that the above family $\{F\backslash F'\}_{F\leq F'}$ also has asymptotic dimension zero uniformly since every $R$-connected component of $F\backslash F'$ is a quotient of an $R$-connected component of $F'$ and thus has uniformly bounded diameter. 
\end{proof}
\begin{lemma}[{\cite[Lemma 5.2.3]{FDC}}]
\label{lem:domains}
Let $R$ be a finitely generated commutative ring with unit and let $\mcn$ be the nilpotent radical of $R$,
\[\mcn=\{r\in R\mid \exists n:r^n=0\}.\]
The quotient ring $S=R/\mcn$ contains a finite number of prime ideals $\mcp_1,\ldots,\mcp_k$ such that the diagonal map 
\[S\rightarrow S/\mcp_1\oplus\ldots\oplus S/\mcp_k\]
embeds $S$ into a finite direct sum of domains.
\end{lemma}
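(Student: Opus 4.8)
The plan is to recognize this as a standard consequence of Noetherianity together with the description of the nilradical as the intersection of the minimal primes. First I would observe that a finitely generated commutative ring with unit is a quotient of a polynomial ring $\bbZ[x_1,\ldots,x_m]$, which is Noetherian by the Hilbert basis theorem; hence $R$ is Noetherian, and so is the quotient $S=R/\mcn$. By construction $S$ is reduced, i.e. its nilradical is the zero ideal, since passing to the quotient by the nilradical kills exactly the nilpotent elements.

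The key step is to produce the finitely many prime ideals. Here I would invoke the fact that a Noetherian ring has only finitely many minimal prime ideals, and I would let $\mcp_1,\ldots,\mcp_k$ denote the minimal primes of $S$. This finiteness is the one substantive input, and it is where the Noetherian hypothesis really enters. It can be obtained either from the primary decomposition of the zero ideal (the minimal primes being the minimal elements among the associated primes) or, more directly, by a Noetherian induction argument: one shows that the collection of radical ideals which fail to be a finite intersection of primes has no minimal element, and is therefore empty.

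Finally I would combine these facts with the standard identity that the nilradical of any commutative ring equals the intersection of all its prime ideals, and hence equals the intersection of its minimal primes. Since $S$ is reduced, this gives $\bigcap_{i=1}^k \mcp_i = 0$. The diagonal map $S \to S/\mcp_1\oplus\ldots\oplus S/\mcp_k$ then has kernel exactly $\bigcap_{i=1}^k \mcp_i = 0$, so it is injective, and each factor $S/\mcp_i$ is a domain because $\mcp_i$ is prime. This produces the required embedding of $S$ into a finite direct sum of domains.

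The main obstacle, and really the only non-formal ingredient, is the finiteness of the set of minimal primes; every other step is immediate once Noetherianity of $S$ is in hand, and the reducedness of $S$ is automatic from the definition of $\mcn$.
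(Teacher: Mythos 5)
Your proof is correct and is exactly the standard argument: the paper itself gives no proof of this lemma, merely citing \cite[Lemma 5.2.3]{FDC}, and the intended justification there is precisely your chain of reasoning (Hilbert basis theorem, finiteness of minimal primes in a Noetherian ring, nilradical equals the intersection of the minimal primes, hence the diagonal map is injective with domain factors). Nothing is missing.
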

\begin{thm}
\label{thm:fdc}
Let $R$ be a commutative ring with unit and trivial nilradical and let $G$ be a finitely generated subgroup of $GL(n,R)$. Then $\{F\backslash G\}_{F\in\Fin}$ has FDC.
\end{thm}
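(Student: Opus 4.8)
The plan is to reduce, by two successive decompositions, to the case of a single finitely generated field and then to invoke \cref{prop:linear} in positive characteristic. First I would replace $R$ by the subring $R_0$ generated by the finitely many matrix entries of a generating set of $G$ together with the entries of their inverses, so that $G\leq GL_n(R_0)$ and $R_0$ is finitely generated. Since $R$ has trivial nilradical and a nilpotent element of the subring $R_0$ is a nilpotent element of $R$, the ring $R_0$ again has trivial nilradical. Hence in \cref{lem:domains} we have $\mcn=0$ and $S=R_0$, so the diagonal map embeds $R_0$ into a finite direct sum $\bigoplus_{j=1}^k D_j$ of finitely generated domains $D_j=R_0/\mcp_j$.

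Passing to the fraction fields $K_j=\operatorname{Frac}(D_j)$, which are finitely generated, the ring embedding induces a group embedding $G\hookrightarrow \prod_{j=1}^k GL_n(K_j)$; let $G_j\leq GL_n(K_j)$ denote the projection of $G$ to the $j$-th factor, a finitely generated linear group, so that $G\leq \prod_{j=1}^k G_j=:\Gamma$. I would equip $G$ with the metric restricted from a product of proper left-invariant metrics on the $G_j$; this is again a proper left-invariant metric, because balls in $\Gamma$ are finite and hence meet $G$ in finite sets, and FDC does not depend on the choice of such a metric. With this metric, for every finite $F\leq G$ the space $F\backslash G$ is isometric to a subspace of $F\backslash \Gamma$, so $\{F\backslash G\}_{F\in\Fin}$ is a subfamily of $\{F\backslash\Gamma\}_{F\in\Fin}$. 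Since FDC passes to subfamilies and, by \cref{lem:prod} applied inductively, $\{F\backslash\Gamma\}_{F\in\Fin}$ has FDC as soon as each $\{F\backslash G_j\}_{F\in\Fin}$ does, it suffices to treat a single finitely generated field $K=K_j$.

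If $K$ has positive characteristic this is exactly \cref{prop:linear}: the family $\{F\backslash G_j\}_{F\in\Fin}$ has finite asymptotic dimension uniformly, and hence FDC. The remaining, and genuinely harder, case is $\operatorname{char}K=0$. Here finite subgroups can no longer be made unipotent, but $G_j$ is virtually torsion free by Selberg's lemma, so there is a uniform bound $N$ on the order of its finite subgroups. Following \cite{GHWNovikovlinear} I would embed $K$ properly into a finite product of local fields and let $G_j$ act properly and isometrically on the associated finite-dimensional CAT(0) space $X$ -- a product of symmetric spaces and Bruhat--Tits buildings -- which has finite asymptotic dimension. Every finite subgroup $F\leq G_j$ then fixes a nonempty convex set $X^F$, and composing the orbit map with the nearest-point projection $p_F\colon X\to X^F$ gives a well-defined map $\Psi_F\colon F\backslash G_j\to X$, $Fg\mapsto p_F(gx_0)$ (well-defined because $p_F(fy)=p_F(y)$ for $f\in F$), which one checks is uniformly expansive with an expansion function independent of $F$.

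The plan is then to apply the Fibering Theorem \cite[Theorem 3.1.4]{FDC} to the map of families $\{\Psi_F\}_F\colon \{F\backslash G_j\}_{F\in\Fin}\to\{X\}$: the target has FDC since $X$ has finite asymptotic dimension, so it remains to show that for every $R>0$ the inverse images $\Psi_F^{-1}(B_R(y))$ form a family of FDC, uniformly in $F$ and $y$. This uniform control of the fibers is the main obstacle. Unlike in a cocompact situation these preimages are unbounded, since the orbit map is only a coarse embedding; they are built from quotients by finite subgroups of the horospherical (unipotent and parabolic) pieces of $X$, and I expect to handle them by an induction over the rank of $X$ in the spirit of \cite{FDC}, using \cref{thm:roe} and \cref{thm:union} together with the bound $N$ to keep all constants uniform in $F$.
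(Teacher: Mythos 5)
Your reduction to a single finitely generated field is exactly the paper's: pass to the finitely generated subring, apply \cref{lem:domains} (with $\mcn=0$), embed $G$ into $\prod_j GL_n(Q(S/\mcp_j))$, and combine the factors with \cref{lem:prod} plus closure of FDC under subfamilies. The positive-characteristic case is also handled the same way, via \cref{prop:linear}. So the structure is right, and that part is correct.

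The gap is the characteristic-zero case. You correctly observe that $G_j$ is virtually torsion-free by Selberg's lemma, but then you set out to reprove from scratch that $\{F\backslash G_j\}_{F\in\Fin}$ has FDC by running the Guentner--Higson--Weinberger machinery (embedding into local fields, acting on a product of symmetric spaces and buildings, projecting to fixed-point sets $X^F$, fibering over $X$). The step you explicitly leave open --- showing that the fibers $\Psi_F^{-1}(B_R(y))$ have FDC uniformly in $F$ and $y$ --- is precisely where all the content of such an argument would live; ``I expect to handle them by an induction over the rank of $X$'' is a research plan, not a proof, and it is far from clear that the horospherical pieces and their quotients by the (non-unipotent) finite subgroups can be controlled this way. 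The missing idea is that none of this geometry is needed: once $G_j$ is virtually torsion-free (and has FDC as a linear group over a field), the family $\{F\backslash G_j\}_{F\in\Fin}$ has FDC by \cite[Theorem~4.10]{KasFDC}, which is what the paper invokes at this point. Roughly, a finite-index torsion-free subgroup bounds the order of all finite subgroups and lets one compare each quotient $F\backslash G_j$ with $G_j$ itself, so the problem reduces to FDC of the group rather than to any equivariant analysis of the fibers. Without that citation (or a complete substitute for it), your argument does not establish the theorem in characteristic zero.
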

\begin{proof}
Because $G$ is finitely generated we can assume that $R$ is finitely generated as well. Since the nilradical of $R$ is trivial, we have $R=S$ in the notation of the previous lemma and there is an embedding
\[GL_n(S)\hookrightarrow GL_n(S/\mcp_1)\times\ldots\times GL_n(S/\mcp_k)\hookrightarrow GL_n(Q(S/\mcp_1))\times\ldots\times GL_n(Q(S/\mcp_k))\]
where $Q(S/\mcp_i)$ is the quotient field of $S/\mcp_i$. Let $G_i$ be the image of the group $G$ in $GL_n(Q(S/\mcp_i))$. If $S/\mcp_i$ has positive characteristic, the family $\{F\backslash G_i\}_{F\in\Fin}$ has FDC by \cref{prop:linear}. If $S/\mcp_i$ has characteristic zero, then $G_i$ is virtually torsion-free by Selberg's Lemma and thus $\{F\backslash G_i\}_{F\in\Fin}$ has FDC by \cite[Theorem 4.10]{KasFDC}.
Now \cref{lem:prod} implies that the family $\{F\backslash G\}_{F\in\Fin}$ also has FDC.
\end{proof}
\begin{proof}[Proof of \cref{thm:main}:]
This follows directly from \cref{thm:fdc} and \cite[Theorems 3.2.2 and 3.3.1]{thesis} if $R$ has trivial nilradical. Note that \cite[Theorems 3.2.2 and 3.3.1]{thesis} are stronger than the similar \cite[Theorem A and Theorem 9.1]{KasFDC}, where an upper bound on the order of the finite subgroups is needed. For convenience we show in the appendix how the results from \cite{KasFDC} can be used to prove the theorems from \cite{thesis}.

If the nilradical $\mcn$ of $R$ is non-trivial, we have an exact sequence
\[1\to(1+M_n(\mcn))\cap G\to G\to H\to 1\]
where $H$ denotes the image of $G$ in $GL_n(R/\mcn)$. Now the $K$-theoretic assembly map for $H$ is split injective and $(1+M_n(\mcn))\cap G$ is nilpotent. Therefore, the preimage of every virtually cyclic subgroup of $H$ is virtually solvable and satisfies the Farrell--Jones conjecture by \cite{solvable}. By \cite[Proposition 4.1]{KasLie} this implies that the $K$-theoretic assembly map for $G$ is split injective as well. The $L$-theory version of the theorem follows in the same way from the results in \cite[Section 6]{KasLie}.
\end{proof}
\section{Dimension of the classifying space}
\comment{Statt Beweis von Satz vlt Satz 5.16 aus Survey zitieren.}
\label{sec:dimension}
In this section we want to prove \cref{prop:dim}. 
We will need the following result about classifying spaces. The proof is the same as the proof of Lück \cite[Theorem 3.1]{MR1757730}.
\begin{thm}
\label{thm:dim}
Let $1\to K\to G\stackrel{\pi}{\to} Q\to 1$ be an exact sequence of groups. Assume that $Q$ admits a finite-dimensional model for $\underbar EQ$ and that there exists an $N\in\bbN$ such that for every finite subgroup $F\in Q$ the preimage admits a model for $\underbar E\pi^{-1}(F)$ of dimension at most $N$. Then there exists a finite-dimensional model for $\underbar EG$.
\end{thm}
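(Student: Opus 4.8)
The plan is to interpolate between two families of subgroups of $G$ and to build the desired model out of a pulled-back model for $Q$ together with the given bounded models for the preimages. Let $\mathcal{G}=\{H\leq G\mid \pi(H)\text{ is finite}\}$ be the family of subgroups of $G$ with finite image; note that $\Fin\subseteq\mathcal{G}$, since a finite subgroup has finite image. First I would construct a finite-dimensional model for $E_{\mathcal{G}}(G)$ as the pullback $\pi^{\ast}\underbar EQ$, i.e. $\underbar EQ$ regarded as a $G$-CW complex via $\pi$. The fixed-point criterion is immediate: for $H\leq G$ we have $(\pi^{\ast}\underbar EQ)^{H}=(\underbar EQ)^{\pi(H)}$, which is contractible when $\pi(H)$ is finite and empty otherwise. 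Hence $\pi^{\ast}\underbar EQ$ is a model for $E_{\mathcal{G}}(G)$ of dimension equal to $\dim\underbar EQ<\infty$.

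Next I would produce a uniform dimension bound for the finite family relative to $\mathcal{G}$. For any $H\in\mathcal{G}$ the subgroup $F:=\pi(H)$ is finite, so $H\leq\pi^{-1}(F)$, and by hypothesis $\underbar E\pi^{-1}(F)$ has a model of dimension at most $N$. Restricting this model along $H\leq\pi^{-1}(F)$ gives a model for $\underbar EH=E_{\Fin\cap H}(H)$, because the restriction of $E_{\Fin}$ of a group to a subgroup is again a classifying space for the finite subgroups and restriction does not increase dimension. Thus there is a single bound $N$ such that $E_{\Fin\cap H}(H)$ admits an $N$-dimensional model for \emph{every} $H\in\mathcal{G}$, not merely for the preimages $\pi^{-1}(F)$ themselves.

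Finally I would glue these pieces together by the standard skeletal construction of Lück, which produces $E_{\Fin}(G)$ from a model for $E_{\mathcal{G}}(G)$ and uniformly bounded-dimensional models for the $E_{\Fin\cap H}(H)$, $H\in\mathcal{G}$. Concretely, one inducts over the skeleta of $X=\pi^{\ast}\underbar EQ$: each equivariant cell $G/H\times D^{k}$ with $H\in\mathcal{G}$ is replaced, using induced spaces $G\times_{H}(-)$ and mapping cylinders built from the chosen $N$-dimensional model of $\underbar EH$, so as to correct the isotropy within $H$ down from $\Fin\cap H$ acting by $\mathcal{G}$-contractibility to genuine $\Fin$-contractibility. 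The outcome is a $G$-CW complex whose $L$-fixed points are contractible for finite $L$ and empty otherwise, hence a model for $\underbar EG$, of dimension at most $\dim\underbar EQ+N$. The main obstacle is exactly this last step: arranging the gluing so that the fixed-point criterion for $E_{\Fin}(G)$ holds at every stage of the induction while the dimension stays bounded by $\dim\underbar EQ+N$. This is precisely the content of the construction in Lück \cite[Theorem 3.1]{MR1757730}, and since the two inputs above (a finite-dimensional $E_{\mathcal{G}}(G)$ and a uniform dimension bound on $E_{\Fin\cap H}(H)$ for $H\in\mathcal{G}$) are met, that argument applies verbatim.
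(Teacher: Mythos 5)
Your argument is correct and is essentially the paper's own proof: the paper simply states that the proof is the same as that of Lück's Theorem 3.1, which is exactly the construction you carry out (pulling back $\underbar EQ$ to obtain a finite-dimensional model for the family of subgroups with finite image, observing that restriction gives uniformly $N$-dimensional models for $\underbar EH$ for all $H$ in that family, and then applying Lück's skeletal gluing construction). No discrepancies to report.
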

\begin{proof}[Proof of \cref{prop:dim}]
For a group $G$ let $\underline{cd}G$ be the shortest length of a projective resolution of $\bbZ$ as a $\bbZ[G]$-module and let $\underline{hd}G$ be the shortest length of a flat resolution of $\bbZ$ of $\bbZ$ as a $\bbZ[G]$-module. Let $\underline{gd}G$ denote the minimal dimension of a model for $\underbar EG$. For a countable group $G$ by Nucinkis \cite[Theorem 4.1]{MR2061566} we have
\[\underline{hd}G\leq\underline{cd}G\leq\underline{hd}G+1.\]
Furthermore,
\[\underline{cd}G\leq \underline{gd}G\leq\max\{\underline{cd}G,3\},\]
where the first inequality follows from taking the cellular chain complex of $\underbar EG$ as a resolution and the second inequality follows from Lück \cite[Theorem 13.19]{transformation}.
By Flores and Nucinkis \cite[Theorem 1]{MR2280168} for a solvable group with finite Hirsch length $l(G)$ it holds that $l(G)=\underline{hd}G$. Note that Flores and Nucinkis use Hillman's definition of the Hirsch rank for elementary amenably group. It can be shown by a simple transfinite induction that for solvable groups this agrees with the definition given in the introduction. Furthermore, every solvable group with infinite Hirsch length has a subgroup with arbitrary large Hirsch length. In particular, the existence of a finite-dimensional model $X$ for $\underbar EG$ directly implies that the Hirsch rank of the solvable subgroups of $G$ is bounded by $\dim X$. It remains to prove the other direction.

Let $R$ be a fixed commutative ring with unit and let $G\leq GL_n(R)$ be finitely generated with $N\in\bbN$ an upper bound on the Hirsch rank of the solvable subgroups of $G$. Since $G$ is finitely generated, we can assume that $R$ is also finitely generated and let $\mcn, S$ and $\mcp_1,\ldots,\mcp_k$ be as in \cref{lem:domains}.
Furthermore, let $H$ denote the image of $G$ in $GL_n(S)$ and $p\colon GL_n(R)\to GL_n(S)$ the projection. Let $A$ be a finitely generated abelian subgroup of $H$. Then $p^{-1}(A)$ is solvable. This implies that the rank of the finitely generated abelian subgroups of $H$ is also bounded by $N$.

First let us show that $H$ admits a finite-dimensional model for $\underbar EH$. By \cref{lem:domains} $H$ embeds into $GL_n(S/\mcp_1)\times\ldots\times GL_n(S/\mcp_k)$ and since $H$ is finitely generated, we can assume that all the domains $S/\mcp_i$ are as well. Order them in such a way that $S/\mcp_1,\ldots,S/\mcp_q$ are of positive characteristic and $S/\mcp_{q+1},\ldots,S/\mcp_k$ are of characteristic zero. Then $GL_n(S/\mcp_{q+1})\times\ldots\times GL_n(S/\mcp_k)$ embeds into $GL_{n(k-q)}(\bbC)$. Let $\pi$ denote the projection of $H$ to $GL_n(S/\mcp_1)\times\ldots\times GL_n(S/\mcp_q)$ and let $\pi_i$ denote the projection of $H$ to $GL_n(S/\mcp_i)$ for $i=1,\ldots,q$, then $\pi_i(H)$ admits a finite-dimensional model $E_i$ for $\underbar E\pi_i(H)$ by \cite[Corollary 5]{poschar} and thus $E_1\times\ldots\times E_q$ is a finite-dimensional model for $\underbar E\pi(H)$. By \cref{thm:dim} it remains to show that for every finite subgroup $F\in \pi(H)$ the preimage $\pi^{-1}(F)$ admits a finite-dimensional model with dimension bounded uniformly in $F$.
Let $\rho$ denote the projection from $H$ to $GL_{n(k-q)}(\bbC)$. Then $\rho(H)$ is virtually torsion free by Selberg's Lemma \cite{selberg}. The group $\rho(\ker \pi)$ is isomorphic to $\ker \pi$ and thus $N$ is a bound on the rank of its finitely generated abelian subgroups. Furthermore, $\rho(\ker\pi)$ has finite index in $\rho(\pi^{-1}(F))$ for every finite subgroup $F\leq \pi(H)$. Thus, the rank of the finitely generated abelian subgroups of $\rho(\pi^{-1}(F))$ is also bounded by $N$. By \cite[Proposition 3.1]{KasLie} this implies that the rank of the finitely generated unipotent subgroups of $\rho(\pi^{-1}(F))$ is bounded by $\frac{N(N+1)}{2}$. This implies that $\rho(\pi^{-1}(F))$ has finite virtual cohomological dimension bounded uniformly in $F$, see \cite[Remark after Theorem 3.3]{alperin}. The order of the finite subgroups in $\rho(\pi^{-1}(F))$ is bounded uniformly in $F$ since they are all contained inside the virtually torsion-free group $\rho(H)$. By \cite[Theorem 1.10]{MR1757730} this implies that there exist finite-dimensional models for $\underbar E\rho(\pi^{-1}(F))$ with dimension bounded uniformly in $F$ and since $\rho\colon \pi^{-1}(F)\to \rho(\pi^{-1}(F))$ has finite kernel they are also models for $\underbar E\pi^{-1}(F)$. This completes the proof that $H$ admits a finite-dimensional model for $\underbar EH$.

For every finite subgroup $F\leq H$, its preimage $A$ in $G$ is virtually nilpotent and thus elementary amenable, and the Hirsch rank of $A$ is bounded by $N$. By the inequalities from the beginning of the proof this implies that there is a model for $\underbar EA$ of dimension at most $N+2$. Using again \cref{thm:dim} we conclude that there exists a finite-dimensional model for $\underbar EG$.
\end{proof}
\section*{Appendix}
\setcounter{section}{1}
\setcounter{equation}{0}
\renewcommand\thesection{\Alph{section}}
In this appendix we want to prove the following
\begin{thm}[{\cite[Theorem 3.2.2]{thesis}}]
\label{main}
Let $G$ be a discrete group such that $\{H\backslash G\}_{H\in\mcF in}$ has FDC and let $\mcA$ be a small additive $G$-category. Assume that there is a finite-dimensional $G$-CW-model for the classifying space for proper $G$-actions $\underbar EG$.
Then the assembly map in algebraic $K$-theory $H_*^G(\underbar EG;\bbK_\mcA)\rightarrow K_*(\mcA[G])$ is a split injection.
\end{thm}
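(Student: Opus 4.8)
The plan is to deduce \cref{main} from the corresponding theorem of \cite{KasFDC}, whose hypotheses are identical except for one extra assumption: a uniform upper bound on the orders of the finite subgroups of $G$. Thus the whole task reduces to removing that bound. The strategy is not to re-prove the splitting from scratch, but to isolate the single place in the argument of \cite{KasFDC} where the order bound is used and to supply the geometric input it provides from the finite decomposition complexity of the family $\{H\backslash G\}_{H\in\Fin}$ instead.

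First I would recall the shape of the construction in \cite{KasFDC}. The splitting of the assembly map $H^G_*(\underbar EG;\bbK_\mcA)\to K_*(\mcA[G])$ is produced by equivariant controlled algebra over a finite-dimensional model $X=\underbar EG$: one builds a transfer $K_*(\mcA[G])\to H^G_*(X;\bbK_\mcA)$ and identifies the composite with the assembly map up to a forget-control map. The finite-dimensionality of $X$ bounds the relevant homological dimension, while FDC enters through a vanishing principle, via an Eilenberg-swindle type argument: the controlled obstruction groups built from a family of finite decomposition complexity vanish, so forgetting control is split injective. In \cite{KasFDC} the bound on the orders of the finite subgroups is used only when passing between $G$ and its quotients $H\backslash G$, where it keeps the multiplicities of the equivariant covers, and hence the dimensions of the associated nerves, under control uniformly in $H$.

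The key observation is that this uniform control is precisely what the FDC hypothesis on the whole family $\{H\backslash G\}_{H\in\Fin}$ already supplies, with no reference to $|H|$. Concretely, I would phrase the controlled-algebra estimates downstairs, on the quotients $H\backslash G$, rather than upstairs on $G$: the decompositions guaranteed by \cref{def:fdc}, applied uniformly to the family $\{H\backslash G\}_{H\in\Fin}$, produce at every scale $r$ the $r$-disjoint, uniformly bounded covers needed to run the vanishing argument, and these descend from $G$ to each $H\backslash G$ along the isometric quotient maps used throughout Sections~2--3. Feeding these uniform decompositions into the construction of \cite{KasFDC} in place of its bounded-order input then lets the transfer and the splitting go through for arbitrary finite subgroups.

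The main obstacle I anticipate is exactly this uniformity: one must arrange that a single decomposition-complexity estimate serves simultaneously for all finite subgroups $H$, so that the resulting transfer is genuinely $G$-equivariant and natural rather than assembled one subgroup at a time. This is where the hypothesis that the entire family $\{H\backslash G\}_{H\in\Fin}$, and not merely $G$ itself, has FDC becomes indispensable, and checking that the covers produced by \cref{def:fdc} interact correctly with the skeletal filtration of the finite-dimensional $X$ is the technical heart of the reduction. Finally, the $L$-theoretic analogue \cite[Theorem 3.3.1]{thesis} should follow by the same reduction applied to the $L$-theoretic statement of \cite{KasFDC}.
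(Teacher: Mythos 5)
Your high-level strategy -- reuse the machinery of \cite{KasFDC} and replace the bounded-order hypothesis by the FDC of the family of quotients -- is the right one, and it is essentially what the appendix does: the proof runs the descent theorem \cref{descent} and the Rips-complex comparison from \cite{KasFDC} verbatim, and only the vanishing input is new. But your proposal misses, and in fact points away from, the actual technical heart of that new input. The descent argument does not take place ``downstairs'' on the quotients $H\backslash G$: it takes place in the $H$-fixed-point controlled categories $\mcA^{H}_{H}(P_sG)$ over $G$ itself, and the vanishing theorem that feeds into it (\cref{thm:rty}, the equivariant version of Ramras--Tessera--Yu) requires an \emph{equivariant} notion of FDC for the family of pairs $\{(G,H)\}_{H\in\Fin}$, in which the decompositions at each scale must be permuted by the $H$-action and the base case is ``semi-bounded'' rather than bounded. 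The hypothesis you are given lives downstairs, so the covers must be \emph{lifted} to $G$ equivariantly, not ``descended from $G$ to each $H\backslash G$'' as you write -- the direction in your third paragraph is backwards.

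This lifting is where the real work is, and it is exactly what your sketch elides. A bounded subset of $H\backslash G$ pulls back to an $H$-orbit $HY$ of a bounded set $Y\subseteq G$, which is \emph{not} uniformly bounded without a bound on $|H|$ (take $Y=\{e\}$: the preimage is $H$ itself). Recovering a uniformly bounded equivariant decomposition requires a further step: one decomposes $HY$ into the cosets $gH'Y$ of the subgroup $H'\leq H$ generated by those elements moving $Y$ by less than $r$; this decomposition is $r$-disjoint by construction, and the order of $H'$ -- hence the diameter $\diam H'Y$ -- is bounded at each scale $r$ by the \emph{properness} of the metric (only finitely many subgroups are generated inside a fixed ball), with no global bound on orders of finite subgroups needed. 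This, together with an induction on decomposition complexity for the higher stages, is the content of \cref{eFDC} (\cite[Proposition 3.2.1]{thesis}), and without it your argument has no bridge from the hypothesis to the vanishing theorem. You should also make explicit that the vanishing theorem itself must be upgraded to the equivariant setting (\cref{thm:rty}); the non-equivariant statement of \cite{k-theory} does not apply to the fixed-point categories.
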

The analogous result in $L$-theory \cite[Theorem 3.3.1]{thesis} follows in the same way from the results of \cite{KasFDC}. We will use the notation introduced in \cite{KasFDC}. Note that in the appendix metrics are allowed to take on the value $\infty$. We will need the following equivariant version of FDC.
\begin{defi}
\label{equivfamily}
An \emph{equivariant metric family} is a family $\{(X_\alpha,G_\alpha)\}_{\alpha\in A}$ where $G_\alpha$ is a group and $X_\alpha$ is a metric $G_\alpha$-space.
\end{defi}
\begin{defi}
An equivariant metric family $\mcX=\{(X_\alpha,G_\alpha)\}_{\alpha\in A}$ \emph{decomposes over a class of equivariant metric families $\mfD$} if for every $r>0$ and every $\alpha\in A$ there exists a decomposition $X_\alpha=U_{\alpha}^r\cup V_{\alpha}^r$ into $G_\alpha$-invariant subspaces and $r$-disjoint decompositions
\[U_{\alpha}^r=\bigcup^{r\text{-disj.}}_{i\in I(r,\alpha)}U_{\alpha,i}^{r}\quad\text{and}\quad V_{\alpha}^r=\bigcup^{r\text{-disj.}}_{j\in J(r,\alpha)}V_{\alpha,j}^{r},\]
such that $G_\alpha$ acts on $I(r,\alpha)$ and $J(r,\alpha)$ and for every $g\in G_\alpha$ we have $gU^{r}_{\alpha,i}=U^{r}_{\alpha,gi}$ and $gV^{r}_{\alpha,j}=V^{r}_{\alpha,gj}$. Furthermore, the families \[\Bigg\{\bigg(\coprod_{i\in I(r,\alpha)}U_{\alpha,i}^{r},G_\alpha\bigg)\Bigg\}_{\alpha\in A}\text{ and}\quad\Bigg\{\bigg(\coprod_{j\in J(r,\alpha)}V_{\alpha,j}^{r},G_\alpha\bigg)\Bigg\}_{\alpha\in A}\] have to lie in $\mfD$.

Notice that the underlying sets of $U_\alpha^r$ and $\coprod_{i\in I(r,\alpha)}U_{\alpha,i}^{r}$ are canonically isomorphic and in this sense the $G_\alpha$-action on $\coprod_{i\in I(r,\alpha)}U_{\alpha,i}^{r}$ is the same as the action on $U_\alpha^r$, only the metric has changed.
\end{defi} 
\begin{defi}
An equivariant metric family $\mcX$ is called \emph{semi-bounded}, if there exists $R>0$ such that for all $(X,G)\in\mcX$ and $x,y\in X$ we have $d(x,y)<R$ or $d(x,y)=\infty$.

Let $e\mfB$ denote the class of semi-bounded equivariant families. We set $e\mfD_0=e\mfB$ and given a successor ordinal $\gamma+1$ we define $e\mfD_{\gamma+1}$ to be the class of all equivariant metric families which decompose over $e\mcD_\gamma$. For a limit ordinal $\lambda$ we define
\[e\mfD_\lambda=\bigcup_{\gamma<\lambda}e\mfD_\gamma.\]
An equivariant metric family $\mcX$ has \emph{finite decomposition complexity (FDC)} if $\mcX$ lies in $e\mfD_\gamma$ for some ordinal $\gamma$.

Note that the equivariant metric family $\{(X_\alpha,\{e\})\}_{\alpha\in A}$ has FDC if and only if the metric family $\{X_\alpha\}_{\alpha\in A}$ has FDC.  
\end{defi}
A metric family $\{X_\alpha\}_{\alpha\in A}$ has uniformly bounded geometry if for every $R>0$ there exists $N\in\bbN$ such that for every $\alpha\in A, U\subseteq X_\alpha$ with $\diam(U)\leq R$ the set $U$ contains at most $N$ elements.

The following is a generalization of Ramras, Tessera and Yu \cite[Theorem 6.4]{k-theory}. The proof is analogous to the proof of \cite[Theorem 6.4]{k-theory} and can be found in \cite{thesis}. The additive $G$-category $\mcA_{G}(X)$ is defined in \cite[Definition 5.1]{KasFDC} and $\mcA^G_G(X)$ denotes the fixed-point category. For a definition of the bounded product see \cite[Definition 5.11]{KasFDC}.
\begin{thm}
\label{thm:rty}
Let $\mcX=\{(X_\alpha,G_\alpha)\}_{\alpha\in A}$ be an equivariant family with FDC, and let the family $\{X_\alpha\}_{\alpha\in A}$ have bounded geometry uniformly. Then
\[\colim_sK_n\Bigg(\prod^{bd}_{\alpha\in A}\mcA^{G_\alpha}_{G_\alpha}(P_sX_\alpha)\Bigg)=0\]
for all $n\in\bbZ$, where the colimit is taken over the maps induced by the inclusion of the respective Rips complexes $P_sX_\alpha$.
\end{thm}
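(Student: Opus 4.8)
The plan is to argue by transfinite induction on the least ordinal $\gamma$ with $\mcX\in e\mfD_\gamma$, mirroring the proof of \cite[Theorem 6.4]{k-theory}. The limit case is immediate from the definition $e\mfD_\lambda=\bigcup_{\gamma<\lambda}e\mfD_\gamma$, so only the base case $\gamma=0$ and the successor (decomposition) step carry content. Throughout, the hypothesis that $\{X_\alpha\}_{\alpha\in A}$ has uniformly bounded geometry is what keeps the controlled categories $\mcA^{G_\alpha}_{G_\alpha}(P_sX_\alpha)$ additive with well-behaved K-theory and makes every construction uniform in $\alpha$, so that it survives the bounded product $\prod^{bd}_\alpha$; equivariance is handled by working only with $G_\alpha$-invariant decompositions and the fixed-point categories, recalling that $G_\alpha$ merely permutes the index sets $I(r,\alpha)$ and $J(r,\alpha)$.

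\textbf{Base case.} For a semi-bounded family the finite part of each $X_\alpha$ breaks into clusters of diameter $<R$ that are pairwise at infinite distance, and for $s\geq R$ each cluster becomes a full simplex in $P_sX_\alpha$. On such bounded configurations I would exhibit a flasque structure, i.e. an endofunctor $\Sigma$ with $\Sigma\cong\id\oplus\Sigma$ on $\prod^{bd}_\alpha\mcA^{G_\alpha}_{G_\alpha}(P_sX_\alpha)$; the Eilenberg swindle then forces the K-theory to vanish for every $s\geq R$. Uniform bounded geometry makes the swindle uniform over $\alpha$, and being built cluster-wise it is $G_\alpha$-equivariant and descends to the fixed-point categories. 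Since the Rips complex is constant for $s\geq R$, the colimit stabilizes and is zero.

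\textbf{Successor step.} Assume the statement for every family in $e\mfD_\gamma$ of uniformly bounded geometry, and let $\mcX$ decompose over $e\mfD_\gamma$. For a fixed $r$ take $X_\alpha=U_\alpha^r\cup V_\alpha^r$ with the $r$-disjoint decompositions whose disjointifications $\coprod_i U_{\alpha,i}^r$ and $\coprod_j V_{\alpha,j}^r$ lie in $e\mfD_\gamma$; the intersection family disjointifies as $\coprod_{i,j}(U_{\alpha,i}^r\cap V_{\alpha,j}^r)$, hence is a subfamily of a member of $e\mfD_\gamma$ and, after a routine check that $e\mfD_\gamma$ is closed under subfamilies, again lies in $e\mfD_\gamma$ with uniformly bounded geometry. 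The crucial geometric point is that $r$-disjointness forbids any simplex from crossing between pieces at Rips scales $s<r$, giving canonical identifications
\[\mcA^{G_\alpha}_{G_\alpha}(P_sU_\alpha^r)\cong\mcA^{G_\alpha}_{G_\alpha}\Big(P_s\coprod_i U_{\alpha,i}^r\Big),\]
and likewise for $V_\alpha^r$ and for the intersection, the right-hand sides being Rips complexes of disjointified families in $e\mfD_\gamma$. Controlled K-theory then supplies an equivariant Mayer--Vietoris sequence relating the $U_\alpha^r$-, $V_\alpha^r$-, $(U_\alpha^r\cap V_\alpha^r)$- and $X_\alpha$-terms, so it suffices to kill the three piece terms in the appropriate colimit and conclude by exactness.

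\textbf{Main obstacle.} The genuinely technical difficulty is that the induction hypothesis yields vanishing of the colimit \emph{over all} $s$ of the disjointified families, whereas the identification above holds \emph{only} for $s<r$: once $s\geq r$ the subspace $U_\alpha^r\subseteq X_\alpha$ and its disjointification stop agreeing. I would therefore let the decomposition parameter $r=r(s)$ grow with the Rips scale and organize the whole argument as a cofinality (diagonal) argument over pairs $(s,r)$ with $s<r$, verifying that the structure maps of the Rips filtration are compatible with the transition between decompositions at different parameters. This is what lets one transport a class at scale $s$ to a larger scale at which the Mayer--Vietoris sequence, together with the vanishing colimits of the piece terms, squeezes the colimit of the $X_\alpha$-terms to zero. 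Carrying out this interleaving while maintaining the bookkeeping of uniform bounded geometry and equivariance is precisely the step performed in \cite{thesis} following \cite[Theorem 6.4]{k-theory}.
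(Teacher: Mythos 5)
The paper does not actually prove \cref{thm:rty}: it states that the proof is analogous to that of \cite[Theorem 6.4]{k-theory} and refers to \cite{thesis} for details. Your outline is precisely that Ramras--Tessera--Yu strategy (transfinite induction on the decomposition ordinal, a swindle for the base class, Mayer--Vietoris plus an interleaving of the Rips scale $s$ with the decomposition scale $r$ for the successor step), so in terms of approach you and the paper agree; you have also correctly isolated the interaction of $s$ and $r$ as the real technical content.

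One step of your base case, as written, would not go through. You suggest that for $s\geq R$ each cluster of a semi-bounded space becomes a full simplex of $P_sX_\alpha$ and that ``on such bounded configurations'' one can exhibit $\Sigma\cong\id\oplus\Sigma$. But the clusters sit at mutual distance $\infty$, so controlled morphisms never connect them, and the category attached to a single bounded cluster is (up to equivalence) built from $\mcA$ itself; if a flasque structure existed there for purely geometric reasons, it would force $K_*(\mcA)=0$. The Eilenberg swindle in the base case comes instead from the auxiliary $\bbN$-coordinate built into the definition of $\mcA_G(X)$ in \cite[Definition 5.1]{KasFDC}: objects are locally finite over $X\times\bbN$ with morphisms controlled only over $X$, so for a (semi-)bounded $X$ one shifts in the $\bbN$-direction; this shift is equivariant and compatible with the bounded product, which is what makes the base case work. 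Your successor step is sound in outline; the point worth making explicit is that while the identification $P_s(U_\alpha^r)=P_s\big(\coprod_iU_{\alpha,i}^r\big)$ needs $s<r$, the inclusion $P_{s''}\big(\coprod_iU_{\alpha,i}^r\big)\subseteq P_{s''}(U_\alpha^r)\subseteq P_{s''}(X_\alpha)$ of subcomplexes holds for every $s''$, so a class transported into the disjointified family at scale $s<r$ and killed there at some scale $s''$ (possibly $\geq r$) still dies in $P_{s''}(X_\alpha)$; this is what closes the diagonal argument you describe.
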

Furthermore, recall the following
\begin{thm}[{\cite[Theorem 7.6]{KasFDC}}]
\label{descent}
Let $G$ be a discrete group admitting a finite-dimensional model for $\underbar EG$ and let $X$ be a simplicial $G$-CW complex with a proper $G$-invariant metric. Assume that for every $G$-set $J$ with finite stabilizers
\[\colim_{K}K_n\Big(\prod^{bd}_{j\in J}\mcA_G(GK)\Big)^G=0,\]
where the colimit is taken over all finite subcomplexes $K\subseteq X$.
Then the assembly map
\[H_*^G(X;\bbK_\mcA)\rightarrow K_*(\mcA[G])\]
is a split injection.
\end{thm}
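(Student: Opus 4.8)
The plan is to run a Carlsson--Pedersen style descent argument, in the additive-category formulation developed in \cite{KasFDC}, and to feed the hypothesis in as the vanishing statement that manufactures a section of the assembly map. The guiding principle is that forgetting metric control over $X$ realizes the assembly map $H_*^G(X;\bbK_\mcA)\to K_*(\mcA[G])$, and that a one-sided inverse of this forget-control map can be built from an Eilenberg swindle along an open cone on $X$, provided the associated germ-at-infinity category is negligible in $K$-theory. The stated colimit vanishing is precisely the negligibility we need, so the whole argument is organised around turning the hypothesis into such a section.

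First I would record the two identifications that turn the statement into a purely $K$-theoretic comparison. Since $X$ is a proper simplicial $G$-CW complex, the controlled categories $\mcA_G(GK)$ of \cite[Definition 5.1]{KasFDC} represent the Davis--Lück $G$-homology theory, so that
\[H^G_*(X;\bbK_\mcA)\cong\colim_{K}K_*\big(\mcA^G_G(GK)\big),\]
the colimit running over finite subcomplexes $K\subseteq X$; on the other side $K_*(\mcA[G])$ is the $K$-theory of the uncontrolled fixed-point category. Under these identifications the assembly map induced by $X\to pt$ becomes the forget-control map
\[\colim_{K}K_*\big(\mcA^G_G(GK)\big)\longrightarrow K_*(\mcA[G]),\]
and it therefore suffices to split this map. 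Finite-dimensionality of $\underbar EG$ enters here to keep the relevant homological dimension, and hence the non-connective delooping, under uniform control.

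Next I would introduce the open cone $\mathcal{O}X=X\times[1,\infty)$ with the radially stretched metric, together with its $G$-equivariant continuously controlled category and the germ-at-infinity quotient, and invoke the Pedersen--Weibel fibration sequence \cite{MR802790} relating the controlled category over $X$, the cone category, and the germ. The forget-control/assembly map appears among the maps of this sequence, and the cone category carries an Eilenberg swindle pushing objects toward infinity. The role of the hypothesis is to make this swindle run equivariantly: the $G$-orbits of the objects produced by the swindle are indexed by a $G$-set $J$ with finite stabilizers --- finiteness of stabilizers being exactly properness of the action on $X$ --- and the germ obstruction is identified with the fixed points
\[\Big(\prod^{bd}_{j\in J}\mcA_G(GK)\Big)^G\]
of the bounded product of \cite[Definition 5.11]{KasFDC}. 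The assumption that $\colim_K K_n$ of this object vanishes is the flasqueness that collapses the obstruction and yields a section of the forget-control map, hence split injectivity of the assembly map.

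The main obstacle I anticipate is the equivariant continuously controlled bookkeeping rather than the formal swindle: setting up the cone category $G$-equivariantly, passing to fixed-point categories compatibly with the Pedersen--Weibel fibration sequence and its non-connective delooping, and above all identifying the germ-at-infinity term with $\big(\prod^{bd}_{j\in J}\mcA_G(GK)\big)^G$ for the correct $G$-set $J$ while keeping all control conditions coherent in the colimit over finite subcomplexes. Once that identification is in place the hypothesis finishes the argument immediately, so it is precisely the verification of this identification, and the compatibility of fixed points with the cone filtration, where the genuine work lies.
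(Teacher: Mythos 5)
You should note first that the paper does not actually prove this theorem: it is quoted verbatim from \cite[Theorem 7.6]{KasFDC}, so the relevant comparison is with the proof given there. Measured against that proof, your sketch has a genuine gap at exactly the point you yourself flag as ``where the genuine work lies'': there is no mechanism by which the germ-at-infinity term of a continuously controlled category over the open cone $\mathcal{O}X$ becomes identified with $\colim_K K_n\big(\prod^{bd}_{j\in J}\mcA_G(GK)\big)^G$ for an arbitrary $G$-set $J$ with finite stabilizers, and this is not how the hypothesis enters the real argument. The hypothesis quantifies over \emph{all} such $J$ because in \cite{KasFDC} the $J$'s index the equivariant cells $G/F\times D^n$ of the finite-dimensional model for $\underbar EG$; the finite stabilizers reflect properness of the action on $\underbar EG$, not (as you assert) properness of the action on $X$, which is not even assumed --- only the metric on $X$ is proper. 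The proof there is a skeletal induction over $\underbar EG$: the bounded products over the cell $G$-sets serve as a substitute for homotopy fixed points, the essential feature of the bounded product of \cite[Definition 5.11]{KasFDC} being that non-connective $K$-theory commutes with it, so that the vanishing hypothesis can be applied one skeleton at a time. Finite-dimensionality of $\underbar EG$ is what makes this induction terminate; it is not about ``keeping the non-connective delooping under uniform control.''

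Beyond the misidentified role of $J$, your sketch never actually constructs the section. In the statement, the colimit over finite subcomplexes $K$ already encodes the ``germ at infinity'' in the large-scale direction (compare the appendix, where it is transported to a colimit over Rips complexes $P_sG$ and killed by \cref{thm:rty}); introducing a second infinity via the open cone duplicates this and then demands a comparison that the formalism of \cite{KasFDC} does not supply --- no equivariant continuously controlled cone category is developed there, so you would have to build that theory and prove the identification, and as stated it is implausible: the Eilenberg swindle only makes the cone category flasque, and the ``orbits of the objects produced by the swindle'' do not assemble into the relevant $G$-sets $J$. To repair the argument you would either have to follow the skeletal induction of \cite{KasFDC}, or, if you insist on the Carlsson--Pedersen route, pass to genuine homotopy fixed points and separately prove that the forget-control map becomes an equivalence on the product level --- which is precisely the content the stated hypothesis encodes and which your proposal assumes rather than establishes.
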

Before we can prove \cref{main} we need the following.
\begin{prop}[{\cite[Proposition 3.2.1]{thesis}}]
\label{eFDC}
Let $G$ be a group such that the metric family $\{H\backslash G\}_{H\in\mcF in}$ has FDC. Then the equivariant metric family $\{(G,H)\}_{H\in\mcF in}$ has FDC as well. 
\end{prop}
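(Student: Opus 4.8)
The plan is to transport the decompositions witnessing FDC of the ordinary family $\{H\backslash G\}_{H\in\Fin}$ to equivariant decompositions of $\{(G,H)\}_{H\in\Fin}$ along the quotient maps $\pi_H\colon G\to H\backslash G$. Fix a proper, left-invariant metric on $G$. Then every finite $H$ acts on $G$ by left translation through isometries, the map $\pi_H$ is surjective and $1$-Lipschitz for the quotient metric, and the fibre over a point of $H\backslash G$ is a single $H$-orbit. Two elementary facts make the transport work: first, since $\pi_H$ is $1$-Lipschitz, the $\pi_H$-preimage of an $r$-disjoint decomposition of $H\backslash G$ is again $r$-disjoint and each preimage piece is $H$-invariant; second, the $H$-quotient of the $\infty$-disjoint union $\coprod_i\pi_H^{-1}(A_i)$ is canonically the $\infty$-disjoint union $\coprod_iA_i$. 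Thus a decomposition upstairs and the induced decomposition downstairs are mirror images of one another.

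I would then prove, by transfinite induction on $\gamma$, the following statement for equivariant families $\{(X_\alpha,H_\alpha)\}$ in which every $H_\alpha$ is finite and acts isometrically and for which the preimage of every bounded subset under $X_\alpha\to H_\alpha\backslash X_\alpha$ is finite: if $\{H_\alpha\backslash X_\alpha\}\in\mfD_\gamma$, then $\{(X_\alpha,H_\alpha)\}\in e\mfD_\gamma$. The family $\{(G,H)\}_{H\in\Fin}$ satisfies these hypotheses because the metric is proper and the subgroups are finite, so \cref{eFDC} is the case $X_\alpha=G$. For the successor step one decomposes each quotient $H_\alpha\backslash X_\alpha$ at scale $r$ as in \cref{def:fdc}, pulls the decomposition back by the first fact above to an $H_\alpha$-invariant equivariant decomposition of $X_\alpha$ in which $H_\alpha$ fixes every index, and observes by the second fact that the resulting equivariant pieces have quotient $\{\coprod_iA_{\alpha,i}\}$. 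A routine permanence lemma, stating that forming $\infty$-disjoint unions does not raise ordinary decomposition complexity, places these quotients in $\mfD_{\gamma-1}$, and the finiteness-of-preimages condition is inherited; hence the inductive hypothesis gives the equivariant pieces in $e\mfD_{\gamma-1}$ and the whole family in $e\mfD_\gamma$. The limit step is immediate from the definitions.

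The base case is where the real work sits, and I expect it to be the main obstacle, because the two theories have genuinely different base classes. A bounded quotient does \emph{not} make $X_\alpha$ bounded: a single $H_\alpha$-orbit $\{hg\mid h\in H_\alpha\}$ can have large diameter $\max_h l(g^{-1}hg)$, and this spread is exactly what the quotient metric collapses and cannot see. The remedy is built into the equivariant definition, which both allows infinite distances and lets $H_\alpha$ permute the pieces. For a fixed scale $r$ I would take the $H_\alpha$-invariant partition of $X_\alpha$ into $r$-connected components; since $H_\alpha$ acts by isometries it permutes these components, so $X_\alpha=\coprod_S S$ is a legitimate equivariant $r$-disjoint decomposition with $H_\alpha$ acting on the index set, and distinct components are pushed to distance $\infty$. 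The crux is then to see that the components which remain $r$-connected have diameter bounded uniformly in $\alpha$ \emph{for that fixed $r$}; here is precisely the point at which properness and the finiteness of $H_\alpha$ must be combined, and where one uses that the constant defining semi-boundedness is permitted to depend on $r$. Establishing this uniform control of the fibres over bounded regions of the quotient, and thereby exhibiting $\{(X_\alpha,H_\alpha)\}$ as a decomposition over $e\mfB$, is the technical heart of the argument and the step I would expect to require the most care.

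Finally, I would isolate the two permanence facts used above, namely stability of ordinary FDC under $\infty$-disjoint unions and stability of the finite-preimage condition under the pullback and quotient constructions, as short separate lemmas. These are what keep the induction honest, and they are the places where the hypotheses on $G$, its proper left-invariant metric and the finiteness of the subgroups in $\Fin$, are actually consumed.
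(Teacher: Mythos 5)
Your overall strategy coincides with the paper's: a transfinite induction that pulls $r$-disjoint decompositions of $H\backslash G$ back along the $1$-Lipschitz, $H$-equivariant quotient map, with all of the difficulty concentrated in the base case, where a bounded quotient must be upgraded by one further equivariant decomposition. The problem is that you stop exactly at the step that constitutes the proof: you assert that the $r$-connected components of $X_\alpha$ have diameter bounded uniformly in $\alpha$, flag it as the technical heart, and leave it there. Here is how that step actually goes, and why it is not automatic. Boundedness of the quotients gives a uniform $R$ and subsets $Y_\alpha$ with $\diam Y_\alpha<R$ and $X_\alpha=H_\alpha Y_\alpha$; fix a basepoint $g_\alpha\in Y_\alpha$ and let $H'_\alpha\leq H_\alpha$ be the subgroup generated by those $g$ with $d(Y_\alpha,gY_\alpha)<r$. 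The cosets $\{hH'_\alpha Y_\alpha\}_{[h]\in H_\alpha/H'_\alpha}$ form an $r$-disjoint, $H_\alpha$-equivariant decomposition, so every $r$-connected component lies in a single coset. Each generator $g$ of $H'_\alpha$ satisfies $g_\alpha^{-1}gg_\alpha\in B_{2R+r}(e)$; since the metric on $G$ is proper, this is one fixed finite subset of $G$, so there are only finitely many finite subgroups generated by subsets of it, which yields a bound $k=k(r,R)$ on $|H'_\alpha|$ that is \emph{uniform in $\alpha$}. Writing any $h\in H'_\alpha$ as a product of at most $k$ such generators then bounds $\diam H'_\alpha Y_\alpha$ by roughly $k(2R+r)$, uniformly in $\alpha$, which is what semi-boundedness of the pieces requires. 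Note that this uniformity comes from all the $X_\alpha$ being subspaces of (disjoint unions of copies of) the \emph{same} proper metric space $G$; your substitute hypothesis, that preimages of bounded sets are finite, gives finiteness for each $\alpha$ separately but no uniform bound, so as stated it does not suffice and should be replaced by the paper's convention of carrying the ambient $\coprod_A G$ through the induction.

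A smaller but real issue: your induction statement ``$\{H_\alpha\backslash X_\alpha\}\in\mfD_\gamma\Rightarrow\{(X_\alpha,H_\alpha)\}\in e\mfD_\gamma$'' fails at $\gamma=0$ for exactly the reason you yourself point out (a bounded quotient does not make the total space semi-bounded). The base case you sketch, once completed as above, proves membership in $e\mfD_1$, not $e\mfD_0$, so the statement that survives the induction is $\mfD_\gamma\Rightarrow e\mfD_{\gamma+1}$ --- which is what the paper proves and which of course still gives FDC.
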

\begin{proof}
Let $\{(X_i,G_i)\}_{i\in I}$ be an equivariant metric family with $G_i\leq G$ a finite subgroup and assume $X_i\subseteq \coprod_{A_i}G$ is a $G_i$-invariant subspace, where $A_i$ is a $G_i$-set.
We prove by induction on the decomposition complexity that the family $\{(X_i,G_i)\}_{i\in I}$ lies in $e\mfD_{\gamma+1}$ if $\{G_i\backslash X_i\}_{i\in I} \in\mfD_\gamma$. For the start of the induction let $\{G_i\backslash X_i\}_{i\in I}$ be in $\mfD_0=\mfB$. Since $G_i\backslash X_i$ is bounded,  there is $a_i\in A_i$ with $X_i\subseteq \coprod_{G_ia_i}G$. Then there exist $R>0$ and $Y_i\subseteq G=\coprod_{\{a_i\}}G\subseteq \coprod_{A_i}G$ with $\diam Y_i<R$ for all $i\in I$ such that $X_i=G_iY_i\subseteq \coprod_{A_i}G$. Let $G_i'\subseteq G_i$ be the stabilizer of $a_i$. Then this is equivalent \[X_i\cong\coprod_{[g]\in G_i/G_i'}gG_i'Y_i\]
with $G_i'Y_i\subseteq G$.

Let $r>0$ be given and define $\mcS_r:=\{H\in\mcF in\mid H=\langle S\rangle, S\subseteq B_{2R+r}(e)\}$ and $k:=\max_{H\in\mcS}|H|$. Let $g_i\in Y_i$ be a fixed base point. Let $H_i\leq G_i'$ be the subgroup generated by those $g\in G_i'$ with $d(Y_i,gY_i)<r$. For these $g$ we have $d(e,g_i^{-1}gg_i)<2R+r$. Therefore, $g_i^{-1}H_ig_i\in\mcS_r$ and $|H_i|\leq k$. We have the decomposition
\[X_i=\bigcup_{[g]\in G_i/H_i}gH_iY_i.\]
This decomposition is $r$-disjoint, since $d(ghy,g'h'y')<r$ with $g,g'\in G_i,h,h'\in H_i$ and $y,y'\in Y_i$ implies that $d(Y_i,h^{-1}g^{-1}g'h'Y_i)<r$ and so by definition the element $h^{-1}g^{-1}g'h'$ lies in $H_i$ which is equivalent to $gH_i=g'H_i$.

By definition of $H_i$ each $h\in H_i$ can be written as $h=g_1\cdots g_l$ with $l\leq |H|\leq k$ and $g_j$ such that $d(Y_i,g_jY_i)<r$. For every $y,y'\in Y_i$ by left-invariance and the triangle inequality we obtain
\begin{align*}
d(y,hy')&\leq d(y,g_1y')+d(g_1y',g_1g_2y')+\ldots d(g_1\cdots g_{l-1}y',hy')\\&=d(y,g_1y')+d(y',g_2y')+\ldots d(y',g_ly')< lr.\end{align*}
Therefore $\diam gH_iY_i=\diam H_iY_i<kr$. Thus, $\{(X_i,G_i)\}_{i\in I}$ is $r$-decomposable over $e\mfD_0=e\mfB$ for every $r>0$ and lies in $e\mfD_1$.

If $\{G_i\backslash X_i\}_{i\in I}$ lies in $\mfD_{\gamma+1}$, then it decomposes over $\mfD_\gamma$ and the preimages under the projection $X_i\to G_i\backslash X_i$ give a decomposition of $\{(X_i,G_i)\}$ over $e\mfD_{\gamma+1}$ by the induction hypothesis. Here $G_i$ acts trivially on the index set of the decomposition. The induction step for limit ordinals is trivial.
\end{proof}
\begin{proof}[Proof of \cref{main}]
By \cite[Proposition 1.5]{KasFDC} $G$ admits a finite dimensional model $X$ for $\underbar EG$ with a left-invariant proper metric. By \cref{descent} we have to show that
\[\colim_KK_n\bigg(\prod_{j\in J}^{bd}\mcA_G(GK)\bigg)^G=0,\]
where the colimit is taken over all finite subcomplexes $K\subseteq X$.
Since the category $\big(\prod_{j\in J}^{bd}\mcA_G(GK)\big)^G$ is equivalent to $\prod_{G j\in G\backslash J}^{bd}\mcA_G^{G_j}(P_sG)$, where $G_j$ is the stabilizer of $j\in J$, this is equivalent to showing that for every family of finite subgroups~$\{G_i\}_{i\in I}$ over some index set $I$ the following holds
\[\colim_KK_n\bigg(\prod_{i\in I}^{bd}\mcA^{G_i}_G(GK))\bigg)=0.\]
By \cite[Lemma 1.8]{KasFDC} and \cite[Proposition 6.3]{KasFDC}, for every finite subcomplex $K\subseteq X$ there exists $K'\subseteq X$ finite and $s>0$ and maps $GK\to P_s(G)\to GK'$ such that the composition is metrically homotopic to the identity. In particular, the composition induces the identity in the $K$-theory of the associated controlled categories. Thus it remains to show
\[\colim_sK_n\bigg(\prod_{i\in I}^{bd}\mcA^{G_i}_G(P_sG))\bigg)=0.\]
Since $\{(G,H)\}_{H\in\mcF in}$ has FDC by \cref{eFDC} and the category $\mcA_G^{G_i}(P_sG)$ is equivalent to $\mcA_{G_i}^{G_i}(P_sG)$, this follows from \cref{thm:rty}.
\end{proof}
\bibliographystyle{amsalpha}
\bibliography{InjectivityLinearGroups}
\end{document}